\newdefinition{example}{Example}[section]
\newdefinition{setting}{Setting}[section]
\newdefinition{case}{Case}[section]
\newdefinition{framework}{Framework}[section]
\newdefinition{rem}{Remark}[section]
\newdefinition{defn}{Definition}[section]
\newtheorem{cor}{Corollary}[section]
\newtheorem{assum}{Assumption}[section]
\newtheorem{thm}{Theorem}[section]
\newtheorem{prop}{Proposition}[section]
\newtheorem{lem}{Lemma}[section]
\newproof{proof}{Proof}
\numberwithin{equation}{section}
\newcommand{\norm}[1]{\left\Vert#1\right\Vert}
\newcommand{\abs}[1]{\left\vert#1\right\vert}
\newcommand{\set}[1]{\left\{#1\right\}}
\newcommand{\seq}[1]{\left<#1\right>}
\newcommand{\E}[1]{\mathbb{E}\left[#1\right]}
\newcommand{\PP}[1]{\mathbb{P}\left[ #1\right] }
\newcommand{\cov}[1]{\mathrm{Cov}\left( #1\right) }
\begin{document}
	\begin{frontmatter} 
		\title{Error analysis for empirical risk minimization over clipped ReLU networks in solving linear Kolmogorov partial differential equations}

		\author[1]{Jichang Xiao\corref{cor1}}
		\ead{xiaojc19@mails.tsinghua.edu.cn}
		\address[1]{Department of Mathematical Sciences, Tsinghua University,  Beijing 100084, China}

		\author[2]{Xiaoqun Wang}
		\ead{wangxiaoqun@mail.tsinghua.edu.cn}
		
		\address[2]{Department of Mathematical Sciences, Tsinghua University,  Beijing 100084, China}

		\cortext[cor1]{Corresponding author}
		\begin{abstract}
			Deep learning algorithms have been successfully applied to numerically solve linear Kolmogorov partial differential equations~(PDEs). A recent research shows that if the initial functions are bounded, the empirical risk minimization (ERM) over clipped ReLU networks generalizes well for solving the linear Kolmogorov PDE. In this paper, we propose to use a truncation technique to extend the generalization results for polynomially growing initial functions. Specifically, we prove that under an assumption, the sample size required to achieve an generalization error within $\varepsilon$ with a confidence level $\varrho$ grows polynomially in the size of the clipped neural networks and $(\varepsilon^{-1},\varrho^{-1})$, which means that the curse of dimensionality is broken. Moreover, we verify that the required assumptions hold for Black-Scholes PDEs and heat equations which are two important cases of linear Kolmogorov PDEs. For the approximation error, under certain assumptions, we establish approximation results for clipped ReLU neural networks when approximating the solution of Kolmogorov PDEs. Consequently, we establish that the ERM over artificial neural networks indeed overcomes the curse of dimensionality for a larger class of linear Kolmogorov PDEs.
			
		\end{abstract}
		\begin{keyword}
			Linear Kolmogorov PDE, curse of dimensionality, empirical risk minimization, generalization error
		\end{keyword}	
		
	\end{frontmatter}
	
	\section{Introduction}
	
	Partial differential equations~(PDEs) have been widely used in modeling problems in physics, finance and engineering. Traditional numerical methods for solving PDEs such as finite differences~\cite{Thomas:2013} and finite elements~\cite{Braess:2007finite} suffer from the curse of dimensionality, which implies that the computational cost grows exponentially as the dimension increases. 
	Recently, many deep learning-based algorithms have been proposed for different classes of PDEs, see \cite{Beck:2018,Yu:2018,Han_2018,raissi2018deep,Raissi:2019physics,Richter2022,Sirignano:2018}. 
	
	In this paper, we focus on the numerical approximation of linear Kolmogorov PDEs, with a special attention to heat equations and the Black-Scholes PDEs. Beck et al.~\cite{Beck:2018} first proposed a deep learning algorithm to approximate the solution over a full hypercube by reformulating the approximation problem into an optimization problem. Berner et al.~\cite{Berner:2020b} further developed the deep learning algorithm to solve parametric Kolmogorov PDEs. Richter et al.~\cite{Richter2022} introduced different formulations of the risk functional to enhance the robustness of the deep learning algorithm. The numerical results in \cite{Beck:2018,Berner:2020b,Richter2022} demonstrated the effectiveness of the deep learning algorithms even in high dimensions. These findings suggest that deep learning algorithms for solving linear Kolmogorov PDEs do not suffer from the curse of dimensionality. 
	
	One theoretical explanation for this phenomenon is that the solutions of Kolmogorov PDEs can be approximated by deep artificial neural networks in which the number of parameters grows polynomially in both the prescribed accuracy $\varepsilon\in (0,1)$ and the dimension $d$, see \cite{Elbr_chter:2021,Grohs:2023,Hutzenthaler_2020,Jentzen:2021}. Besides the analysis of the approximation error for deep artificial neural networks, Berner et al.~\cite{Berner:2020a} proved that under suitable conditions, the generalization error resulted from empirical risk minimization~(ERM) also breaks the curse of dimensionality. From the perspective of learning theory, the generalization results in \cite{Berner:2020a} shows the agnostic probably approximately correct (PAC) learnability of the clipped ReLU neural networks for the quadratic loss function and the region $[u,v]^{d} \times [-D,D]$. 
	
	Since the Hoeffding's inequality is commonly used to establish the PAC-type inequality, the generalization results in \cite{Berner:2020a} are established only for  linear Kolmogorov PDEs with bounded initial functions. There are many linear Kolmogorov PDEs in which the initial functions are in fact unbounded, for example, the Black-Scholes PDEs in the problems of the option pricing of basket call and call on max options. In this paper, we prove that for linear Kolmogorov PDEs with unbounded initial function, the generalization error still does not suffer from the curse of dimensionality if some assumptions are satisfied. 
	
	Under an assumption regarding the tail probability, we extend the generalization results in \cite{Berner:2020a} for a broader class of linear Kolmogorov PDEs, where the initial functions grow polynomially instead of being bounded. We can prove that the required assumption holds for some important equations such as  Black-Scholes PDEs and heat equations. In fact, the extended generalization results hold for the general regression problem under Setting \ref{set:optim}. The truncation technique makes it possible to establish the PAC bound of the generalization error when the empirical loss is unbounded. Specifically, the idea is to truncate both the true loss and empirical loss, then we can apply the Hoeffding's inequality to the truncated parts which are now bounded. By carefully choosing an appropriate truncation threshold, we can establish generalization error bound for ERM over the clipped neural networks. 
	As for the approximation error, we extend the approximation results in \cite{Grohs:2023} and establish approximation properties of clipped neural networks for solutions of linear Kolmogorov PDEs. By summarizing the results of generalization and approximation errors, we demonstrate that, under certain assumptions, the ERM approach indeed overcomes the curse of dimensionality for linear Kolmogorov PDEs. Moreover, We verify that these assumptions hold for specific heat equations and Black-Scholes equations where the initial functions are polynomially growing.
	
	This paper is organized as follows. In Section \ref{sec:back}, we present the generalized supervised regression problem in Setting \ref{set:optim} for the deep learning algorithm in solving the linear Kolmogorov PDEs. We also introduce the background knowledge related to the ERM principle and the error decomposition. In Section~\ref{sec:main}, under certain assumptions, we establish theoretical results of generalization and approximation errors. For two important linear Kolmogorov PDEs, namely the heat equation and Black-Scholes PDEs, we prove that these assumptions hold if the initial functions grows polynomially and can be approximated by ReLU neural networks without the curse of dimensionality. Section \ref{sec:conclusion} concludes the paper.
	
	\section{Background Knowledge}\label{sec:back}
	\subsection{Deep learning and empirical risk minimization}\label{sec:2.1}
	Let $d\in \mathbb{N}$, $T\in(0,\infty)$ and let $f_{d}(x,t)\in C(\mathbb{R}^{d}\times[0,T],\mathbb{R})$ be the solution of the linear Kolmogorv PDE which satisfies for every $x\in \mathbb{R}^{d}$, $t\in (0,T)$ that
	\begin{align}
		\label{eq:target}
		\begin{cases}
			(\nabla_{t}f_{d})(x,t) = \frac{1}{2}\mathrm{Trace}\left( \sigma_{d}(x)[\sigma_{d}(x)]^{*}(\nabla_{x}^{2}f_{d})(x,t)\right) +\seq{\mu_{d}(x),(\nabla_{x}f_{d})(x,t)}_{\mathbb{R}^{d}}, \\
			f_{d}(x,0) = \varphi_{d}(x)
		\end{cases}
	\end{align}
	where $\mu_{d}:\mathbb{R}^{d} \to \mathbb{R}^{d}$ and $\sigma_{d}:\mathbb{R}^{d} \to \mathbb{R}^{d\times d}$ are Lipschitz continuous, $\varphi_{d}:\mathbb{R}^{d} \to \mathbb{R}$ is the initial function, $\left[ \sigma_{d}(x)\right] ^{*}$ is the transpose of the matrix $\sigma_{d}(x)$.
	
	Beck et al.~\cite{Beck:2018} first proposed a novel deep learning algorithm to approximate $f_{d}(x,T)$ on the hypercube $\in[u,v]^{d}$ by artificial neural networks. The key idea is to reformulate the solutions of the Kolmogorov PDEs as the solutions of infinite dimensional optimization problems, see \cite[Corollary 2.4]{Beck:2018} for details. The fundamental theory in \cite{Beck:2018} considers the strong solution $f_{d}(x,t)\in C^{2,1}(\mathbb{R}^{d}\times[0,T],\mathbb{R})$. For the viscosity solution \cite{Hairer}, especially for the solution of Black-Scholes PDEs, the optimization problem is still valid under some regularity conditions, see \cite[Lemma 3.2]{Berner:2020a} and \cite[Theorem A.1]{Berner:2020b}. 
	
	Let $\set{B_{t},\mathcal{F}_{t}\mid 0\leq t\leq T}$ be the standard $d$-dimensional Brownian motion on the filtered probability space $(\Omega,\mathcal{F}, \left(\mathcal{F} \right)_{t\in [0,T]} ,\mathbb{P})$. Let $X_{d}: \Omega \to [u,v]^{d}$ be uniformly distributed and $\mathcal{F}_{0}$-measurable. Let $\set{S_{t},\mathcal{F}_{t}\mid 0\leq t\leq T}$ be the continuous stochastic process satisfying that for every $t\in [0,T]$ it holds $\mathbb{P}$-a.s.
	\begin{equation}\label{eq:sde}
		S_{t} = X_{d} + \int_{0}^{t}\mu_{d}(S_{u})du + \int_{0}^{t}\sigma_{d}(S_{u})dB_{u}.
	\end{equation}
	The objective function of the optimization problem is given by
	$$\mathcal{E}_{d}(f) = \E{\left( f(X_{d})-\varphi_{d}(S_{T})\right) ^{2}}.$$
	In this paper, we assume that the solution of the optimization problem is indeed $f_{d}(x,T),x\in[u,v]^{d}$. It is beyond the scope of this paper to study on the regularity conditions under which the solution $f_{d}(\cdot,T)$ solves the optimization problem, we refer the readers to \cite{Beck:2018,Berner:2020b,Berner:2020a} for details. The next setting generalizes the specific optimization problem into a special regression problem with the hidden output.
	\begin{setting}[Regression problem ]\label{set:optim}
		On the probability space $\left(\Omega,\mathbb{P}\right)$, let $X_{d}: \Omega \to [u,v]^{d}$ be the input and $Y_{d}:\Omega \to \mathbb{R}^{d}$ be the hidden output. Let $\varphi_{d}:\mathbb{R}^{d} \to \mathbb{R}$ satisfy $\E{\varphi_{d}(Y_{d})^{2}}<\infty$. Then for regression problem of data $(X_{d},\varphi_{d}(Y_{d}))$, define the risk functional  $$\mathcal{E}_{d}(f) = \E{\left( f(X_{d})-\varphi_{d}(Y_{d})\right) ^{2}}$$ for every $f \in \mathcal{L}^{2}([u,v]^{d})$. The optimization problem is given by
		\begin{equation}
			\min_{f \in \mathcal{L}^{2}([u,v]^{d})}\mathcal{E}_{d}(f). \nonumber
		\end{equation}
	\end{setting}

	We cannot directly compute the risk $\mathcal{E}$ due to its dependence on the unknown random variable $S_{T}$. In machine learning theory, the ERM principle solves the optimization problem by minimizing the empirical risk over the hypothesis class $\mathcal{H}\subseteq C([u,v]^{d},\mathbb{R})$. 
	
	\begin{setting}[Empirical Risk Minimization]\label{set:ERM}
		Assume Setting \ref{set:optim}, let 
		$\left\lbrace (\bm x_{i},\bm y_{i})\mid 1\leq i<\infty\right\rbrace $ be the independent and identically distributed~(i.i.d.) samples simulated from the population $(X_{d},Y_{d})$. Let $m\in \mathbb{N}$, define the empirical risk $\mathcal{E}_{d,m}$ by
		\begin{equation}
			\mathcal{E}_{d,m} = \frac{1}{m}\sum_{i=1}^{m}\left( f(\bm x_{i})-\varphi_{d}(\bm y_{i})\right) ^{2}. \nonumber
		\end{equation}
		Let $C([u,v]^{d})$ be the Banach space of continuous functions with norm $\norm{\cdot}_{\infty}$ defined by 
		$$\norm{f}_{\infty} = \underset{x\in [u,v]^{d}}{\max}\abs{f(x)}.$$
		Let the hypothesis class $\mathcal{H}\subseteq C([u,v]^{d})$ be compact. The optimal estimator $f_{d,\mathcal{H}}$ on $\mathcal{H}$ is defined by 
		\begin{equation}
			f_{d,\mathcal{H}} = \underset{f\in \mathcal{H}}{\mathrm{argmin}}\ \mathcal{E}_{d}(f), \nonumber
		\end{equation}
		the estimator obtained by ERM is defined by
		\begin{equation}
			f_{d,m,\mathcal{H}} =\underset{f\in \mathcal{H}}{\mathrm{argmin}}\ \mathcal{E}_{d,m}(f). \nonumber
		\end{equation}	
	\end{setting}
	\begin{rem}
		We can directly simulate the uniform distributed $X_d$, but simulating $Y_d = S_T$ is more complex. For important cases of linear kolmogorov PDEs, such as heat equations and Black-Scholes PDEs, the SDE \eqref{eq:sde} can be solved explicitly. In general cases, we usually use the Euler–Maruyama scheme to approximate $S_{T}$ numerically. The existence of $f_{d,\mathcal{H}}$ and $f_{d,m,\mathcal{H}}$ is guaranteed by the compactness of $\mathcal{H}$ (see Proposition \ref{prop:exist} below), the measurability of $f_{d,m,\mathcal{H}}:\Omega \to \mathcal{H}$ is proved in \cite[Section S.1]{Berner:2020a}.
		In classical learning theory \cite{Anthony:1999,Cucker:2002,Koltchinskii:2011,Shalev:2014}, the output $\varphi_{d}(Y_{d})$ is usually assumed to be bounded, i.e., $\sup_{y\in \mathbb{R}^{d}}\abs{\varphi_{d}(y)}<\infty$.
		In the context of nonparametric regression, truncation techniques are applied to conduct error analysis when $\varphi_{d}(Y_{d})$ has finite moments or follows a sub-Gaussian or sub-exponential distribution, see \cite{gyorfi2002distribution,bauer2019deep,schmidt2020nonparametric,jiao2023deep,kohler2021rate} for details. In this paper, we assume that $\varphi_{d}$ is a polynomially growing function and $Y_d$ satisfies certain tail probability condition. 
	
	\end{rem}
	\begin{prop}\label{prop:exist}
		The optimal estimators $f_{d,\mathcal{H}}$ and $f_{d,m,\mathcal{H}}$ in Setting \ref{set:ERM} exist.
	\end{prop}
	\begin{proof}
		For every $d\in \mathbb{N}$, $f_{1},f_{2}\in \mathcal{H}$, we have
		\begin{eqnarray*}
			\abs{\mathcal{E}(f_{1})-\mathcal{E}(f_{2})} 
			&\leq& \E{\abs{f_{1}(X_{d})-f_{2}(X_{d}) }\abs{f_{1}(X_{d})+f_{2}(X_{d})-2\varphi_{d}(Y_{d}) }  }\\
			&\leq&\left(\E{\abs{f_{1}(X_{d})}+\abs{f_{2}(X_{d})}+2\abs{\varphi_{d}(Y_{d})}} \right)\norm{f_{1}-f_{2}}_{\infty},
		\end{eqnarray*}
		by compactness of $\mathcal{H}$, the risk $\mathcal{E}:\mathcal{H}\to \mathbb{R}$ is Lipschitz continuous on $\mathcal{H}$, thus $f_{d,\mathcal{H}}$ exists. For every $\omega \in \Omega$, $f_{d,m,\mathcal{H}}(\omega)$ exists in the same way.
	\end{proof}
	
	In deep learning, a variety of artificial neural networks are designed for different application tasks. Here we take the widely-used feedforward architecture with the ReLU activation function as the hypothesis class $\mathcal{H}$, we call it ReLU neural networks.
	
	\begin{defn}[ReLU neural networks]\label{dfn:ann}
		Let $L\in \mathbb{N}$, $N_{0}, N_{1},\dots,N_{L}\in \mathbb{N}$, let $A_{l}\in \mathbb{R}^{N_{l-1}\times N_{l}}$ and $B_{l}\in \mathbb{R}^{N_{l-1}}$ for $l=1,2,\dots,L$. Denote the collection of $A_{l},B_{l}$ by 
		$$\theta = \left((A_{1},B_{1}),\dots,(A_{L},B_{L}) \right). $$
		The ReLU neural network $\Phi_{\theta}:\mathbb{R}^{N_{0}}\to \mathbb{R}^{N_{L}}$ with parameters $\theta$ is defined by 
		\begin{equation}
			\Phi_{\theta}(x)=W_{L}\circ\rho \circ W_{L-1}\circ\rho\cdots\circ\rho\circ W_{1}(x), \nonumber
		\end{equation}
		where $\left\lbrace W_{l}(x) = A_{l}x+B_{l},1\leq l\leq L\right\rbrace $ are affine transformations and $\rho(x)$ is the ReLU activation function applied component-wisely, i.e.,
		\begin{equation}
			\rho(x_{1},\dots,x_{N}) = \left(\max\left\lbrace x_{1},0 \right\rbrace,\dots,\max\left\lbrace x_{N},0 \right\rbrace \right) . \nonumber
		\end{equation}
		Let $R\in (0,\infty)$, $a = \left( d, N_{1},\dots,N_{L-1},1\right) \in \mathbb{N}^{L+1}$, define the function class $\mathcal{N}_{a,R}$ of ReLU neural networks with architecture $a$ and parameter bound $R$ by
		\begin{equation}
			\mathcal{N}_{a,R} = \set{\Phi_{\theta}|_{[u,v]^{d}}: \norm{\theta}_{\infty}\leq R}, \nonumber
		\end{equation} 
		where $\norm{\theta}_{\infty} = \max_{1\leq l\leq L}\max\left\lbrace \norm{A_{l}}_{\infty},\norm{B_{l}}_{\infty}\right\rbrace$. Define the depth $L(a)$, the width $W(a)$ and the number of parameters $P(a)$ of the neural networks respectively by 
		\begin{eqnarray}
			L(a) = L, \ \ W(a) = \norm{a}_{\infty}, \ \ P(a) = \sum_{l=1}^{L}N_{l}(N_{l-1}+1). \nonumber
		\end{eqnarray}
		
	\end{defn} 
	
	We consider the restriction of neural networks on $[u,v]^{d}$ since we only intend to approximate the solution on the hypercube $[u,v]^{d}$. The function class $\mathcal{N}_{a,R}$ is indeed a compact subspace of $C([u,v]^{d})$, which can be proved with the Lipschitz continuity (\cite[Theorem 2.6]{Berner:2020a}) of the map $\theta \mapsto \Phi_{\theta}$ and the compactness of $\set{\norm{\theta}_{\infty}\leq R}$, here we equivalent $\theta$ to a vector on $\mathbb{R}^{P(a)}$. The number of parameters $P(a)$ and the parameter bound $R$ are two important quantities that measure the complexity of the function class $\mathcal{N}_{a,R}$.

	\subsection{Estimation error of Empirical Risk Minimization}
	Under Setting \ref{set:optim}, 
	We evaluate the estimator $f_{d,m,\mathcal{H}}$ obtained by ERM with the estimation error
	$$\mathbb{E}\left[ \left( f_{d,m,\mathcal{H}}(X_{d})-f_{d}(X_{d},T)\right) ^{2}\right].$$
	
	For the regression problem with quadratic loss function, we have the classical bias-variance decomposition.
	\begin{cor}\label{cor:composition}
		For the estimation error, we have the decomposition
		\begin{eqnarray*}\label{eq:errrorComposition}
			&&\mathbb{E}\left[ \left( f_{d,m,\mathcal{H}}(X_{d})-f_{d}(X_{d},T)\right) ^{2}\right]\\
			&=&\underbrace{\mathcal{E}_{d}(f_{d,m,\mathcal{H}})-\mathcal{E}_{d}(f_{d,\mathcal{H}})}_{\text{generalization error}}+\underbrace{\norm{f_{d,\mathcal{H}}-f_{d}(\cdot,T)}_{L^{2}(P\circ X_{d}^{-1})}^{2}}_{\text{approximation error}}.
		\end{eqnarray*}		
	\end{cor}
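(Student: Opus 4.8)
The plan is to deduce the decomposition directly from Lemma~\ref{lem:excessrisk} by applying it twice, together with the elementary relation between the uniform expectation and the normalized $L^{2}$-norm on $[u,v]^{d}$.

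First I would apply Lemma~\ref{lem:excessrisk} with $f=f_{d,m,\mathcal{H}}$. Since $f_{d,m,\mathcal{H}}(\omega)\in\mathcal{H}\subseteq C([u,v]^{d},\mathbb{R})$ for each $\omega\in\Omega$, the hypothesis of the lemma is met for each realization of the (random) estimator, and one obtains
\[
\mathbb{E}\big[(f_{d,m,\mathcal{H}}(X_{d})-f_{d}(X_{d},T))^{2}\big]=\mathcal{E}_{d}(f_{d,m,\mathcal{H}})-\mathcal{E}_{d}\big(f_{d}(\cdot,T)\big),
\]
where on the left the expectation is over $X_{d}$ with $f_{d,m,\mathcal{H}}$ held fixed (equivalently, a conditional identity given the training sample). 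Next I would add and subtract $\mathcal{E}_{d}(f_{d,\mathcal{H}})$:
\[
\mathcal{E}_{d}(f_{d,m,\mathcal{H}})-\mathcal{E}_{d}\big(f_{d}(\cdot,T)\big)=\big[\mathcal{E}_{d}(f_{d,m,\mathcal{H}})-\mathcal{E}_{d}(f_{d,\mathcal{H}})\big]+\big[\mathcal{E}_{d}(f_{d,\mathcal{H}})-\mathcal{E}_{d}\big(f_{d}(\cdot,T)\big)\big].
\]
The first bracket is, by definition, the generalization error. For the second bracket I would apply Lemma~\ref{lem:excessrisk} a second time, now with $f=f_{d,\mathcal{H}}\in\mathcal{H}$, giving $\mathcal{E}_{d}(f_{d,\mathcal{H}})-\mathcal{E}_{d}(f_{d}(\cdot,T))=\mathbb{E}[(f_{d,\mathcal{H}}(X_{d})-f_{d}(X_{d},T))^{2}]$; since $X_{d}$ is uniformly distributed on $[u,v]^{d}$ (Setting~\ref{set:optim}), this expectation equals $(v-u)^{-d}\norm{f_{d,\mathcal{H}}-f_{d}(\cdot,T)}_{L^{2}([u,v]^{d})}^{2}$, the approximation error. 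Concatenating the three displays yields the stated identity.

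I do not anticipate any genuine obstacle: all the analytic content is already packaged into Lemma~\ref{lem:excessrisk}, and what remains is bookkeeping plus the change-of-variables identity $\mathbb{E}[g(X_{d})]=(v-u)^{-d}\int_{[u,v]^{d}}g$. The only points worth a line of care are the measurability and integrability of the terms containing the random function $f_{d,m,\mathcal{H}}$; these are covered by the existence and measurability statements already cited (Proposition~\ref{prop:exist} and \cite[Section S.1]{Berner:2020a}), and finiteness of the risks follows, exactly as in the proof of Proposition~\ref{prop:exist}, from the integrability assumption on $\varphi_{d}(Y_{d})$ together with the uniform bound on $\mathcal{H}$ provided by its compactness.
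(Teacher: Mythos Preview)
Your proposal is correct and follows essentially the same route as the paper: apply Lemma~\ref{lem:excessrisk} with $f=f_{d,m,\mathcal{H}}$, add and subtract $\mathcal{E}_{d}(f_{d,\mathcal{H}})$, apply Lemma~\ref{lem:excessrisk} once more with $f=f_{d,\mathcal{H}}$, and rewrite the resulting expectation as the normalized $L^{2}$-norm using that $X_{d}$ is uniform on $[u,v]^{d}$. Your additional remarks on measurability and integrability are fine but go slightly beyond what the paper records.
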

	
	We follow \cite{Beck_2022,Berner:2020a} to refer $\mathcal{E}_{d}(f_{d,m,\mathcal{H}})-\mathcal{E}_{d}(f_{d,\mathcal{H}})$ as the generalization error. The classical learning theory aims to establish the PAC inequality for the generalization error, which provides a bound on the sample size $m$ in terms of complexity measures of the hypothesis class $\mathcal{H}$ such that the generalization error is smaller than $\varepsilon$ with confidence greater than $1-\varrho$, i.e.,
	\begin{equation}\label{eq:genetheory}
		\PP{\mathcal{E}_{d}(f_{d,m,\mathcal{H}})-\mathcal{E}_{d}(f_{d,\mathcal{H}})\leq \varepsilon}\geq 1-\varrho, 
	\end{equation}
	see \cite{Anthony:1999,Cucker:2002,Koltchinskii:2011,Shalev:2014} for details. For linear kolmogorov PDEs, Berner et al. \cite{Berner:2020a} prove that the ERM over clipped ReLU neural networks (see Definition \ref{dfn:clipped} below) overcomes the curse of dimensionality. More precisely, they establish polynomial bounds on sample size $m$ in terms of $\left( d,\varepsilon^{-1},\log \left( \varrho^{-1}\right),P(a),R \right) $ to ensure that the inequality \eqref{eq:genetheory} holds for the clipped ReLU neural networks which
	are truncation of the standard ReLU neural networks and uniformly bounded by a constant $D$.
	\begin{defn}[clipped ReLU neural networks]\label{dfn:clipped}
		Let $D\in (0,\infty)$, define function class $\mathcal{N}_{a,R,D}$ of  clipped ReLU neural networks with architecture $a$ and weight bound $R$ by
		\begin{equation}
			\mathcal{N}_{a,R,D} = \set{\mathrm{Clip}_{D}\circ\Phi\mid \Phi \in \mathcal{N}_{a,R}}, \nonumber
		\end{equation} 
		where the clipping function $\mathrm{Clip}_{D}$ is given by
		\begin{equation}
			\mathrm{Clip}_{D}(x)=\max\left\lbrace x,D \right\rbrace \mathrm{sgn}(x). \nonumber
		\end{equation} 
	\end{defn}
	
	The generalization results in \cite{Berner:2020a} hold for linear Kolmogorov PDEs with  bounded initial functions $\varphi_{d}$ satisfying 
	\begin{equation}
		\sup_{d\in\mathcal{N}}\sup_{y\in\mathbb{R}^{d}}\abs{\varphi_{d}(y)} \leq D<\infty. \nonumber
	\end{equation}
	In Section \ref{sec:main}, we extend this results and prove that ERM over clipped neural networks also overcomes the curse of dimensionality when the function $\varphi_{d}$ are unbounded.

	\section{Main Results}\label{sec:main}
	
	\subsection{Generalization error}\label{sec:gene}
	In this section, we establish the generalization error bound for unbounded function $\varphi_{d}$. We first introduce the truncated function as well as the truncated risk and the truncated empirical risk. Throughout this section, we have $\mathcal{H}=\mathcal{N}_{a,R,D}$.
	\begin{defn}
		Let $\bm 1{\left\lbrace \cdot\right\rbrace} $ be the indicator function, for $K\in (0,\infty)$, the truncated function $\varphi_{d}^{(K)}(\cdot)$ is defined by 
		$$\varphi_{d}^{(K)}(x) = \bm 1{\left\lbrace \norm{x}_{\infty}\leq K\right\rbrace}\varphi_{d}(x), $$
		the truncated risk $\mathcal{E}_{d}^{(K)}\left( \cdot\right) $ is defined by
		$$\mathcal{E}_{d}^{(K)}\left( f\right) = \E{\left( f(X_{d})-\varphi_{d}^{(K)}(Y_{d})\right)^{2} },$$
		and the truncated empirical risk $\mathcal{E}_{d,m}^{(K)}\left( \cdot\right) $ is defined by
		$$\mathcal{E}_{d,m}^{(K)}\left( f\right) = \frac{1}{m}\sum_{i=1}^{m}\left[ f(\bm x_{i})-\varphi_{d}^{(K)}(\bm y_{i})\right] ^{2}.$$
	\end{defn}
	
	The next lemma gives the upper bound of the generalization error in terms of the truncated risk and the truncated empirical risk.
	\begin{lem}\label{lem:geneupperbound}
		Assume Setting \ref{set:ERM}. Let $K\in (0,\infty)$, then for every $\omega \in \Omega$ it holds that
		\begin{eqnarray*}
			\mathcal{E}_{d}(f_{d,m,\mathcal{H}}(\omega))-\mathcal{E}_{d}(f_{d,\mathcal{H}})&\leq& 2\sup_{f \in  \mathcal{H}}\abs{\mathcal{E}_{d}(f)-\mathcal{E}_{d,m}(f)} \\
			&\leq& 2(G_{1}+G_{2}+G_{3}),
		\end{eqnarray*}
		where $G_{1},G_{2},G_{3}$ are defined by
		\begin{eqnarray*}
			G_{1} &=& \sup_{f \in \mathcal{H}}\abs{\mathcal{E}_{d}(f)-\mathcal{E}_{d}^{(K)}(f)}, \\
			G_{2} &=& \sup_{f \in \mathcal{H}}\abs{\mathcal{E}_{d}^{(K)}(f)-\mathcal{E}^{(K)}_{d,m}(f)},\\
			G_{3} &=& \sup_{f \in \mathcal{H}}\abs{\mathcal{E}_{d,m}^{(K)}(f)-\mathcal{E}_{d,m}(f)}.
		\end{eqnarray*}
	\end{lem}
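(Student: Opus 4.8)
The plan is to establish the two inequalities separately. For the first inequality, I would use the standard ERM argument: since $f_{d,m,\h H}$ minimizes $\h E_{d,m}$ over $\h H$ and $f_{d,\h H}$ minimizes $\h E_d$ over $\h H$, I can write
\[
\h E_d(f_{d,m,\h H}) - \h E_d(f_{d,\h H}) = \bigl[\h E_d(f_{d,m,\h H}) - \h E_{d,m}(f_{d,m,\h H})\bigr] + \bigl[\h E_{d,m}(f_{d,m,\h H}) - \h E_{d,m}(f_{d,\h H})\bigr] + \bigl[\h E_{d,m}(f_{d,\h H}) - \h E_d(f_{d,\h H})\bigr].
\]
The middle bracket is $\leq 0$ by the definition of $f_{d,m,\h H}$, and each of the remaining two brackets is bounded in absolute value by $\sup_{f\in\h H}\abs{\h E_d(f)-\h E_{d,m}(f)}$, giving the factor $2$. (One should note that all quantities are finite here because $\h H=\mathcal N_{a,R,D}$ consists of uniformly bounded functions and, under the running assumption $\E{\abs{\varphi_d(Y_d)}}<\infty$ from Proposition~\ref{prop:exist}, in fact one needs $\E{\varphi_d(Y_d)^2}<\infty$ for $\h E_d$ to be finite — this should be flagged as part of Setting~\ref{set:ERM}'s standing hypotheses or addressed in the generalization theorems that invoke this lemma.)

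For the second inequality, the key step is the triangle-inequality decomposition through the truncated risk and truncated empirical risk. For any fixed $f\in\h H$,
\[
\abs{\h E_d(f)-\h E_{d,m}(f)} \leq \abs{\h E_d(f)-\h E_d^{(K)}(f)} + \abs{\h E_d^{(K)}(f)-\h E_{d,m}^{(K)}(f)} + \abs{\h E_{d,m}^{(K)}(f)-\h E_{d,m}(f)}.
\]
Taking the supremum over $f\in\h H$ on the left and using subadditivity of $\sup$ over a sum on the right yields exactly $\sup_{f\in\h H}\abs{\h E_d(f)-\h E_{d,m}(f)} \leq G_1+G_2+G_3$. Combining with the first inequality gives the claimed bound by $2(G_1+G_2+G_3)$. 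Note the statement is pathwise (for every $\omega\in\Omega$), so the decomposition of $\abs{\h E_d(f)-\h E_{d,m}(f)}$ and of $G_2,G_3$ should be read with $\h E_{d,m}$, $\h E_{d,m}^{(K)}$ evaluated at the given $\omega$; no expectation or probabilistic argument is needed at this stage.

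There is no real obstacle here — the lemma is essentially a bookkeeping step that sets up the subsequent analysis, where $G_1$ will be controlled by a tail/integrability estimate on $\varphi_d(Y_d)$ (choosing $K$ large), $G_2$ by a uniform concentration bound over the compact class $\h H$ with now-bounded integrand $\varphi_d^{(K)}$ (classical learning theory applies since $\varphi_d^{(K)}$ is bounded by a constant depending on $K,d$), and $G_3$ by a second tail estimate controlling how often the samples $\bm y_i$ escape $\set{\norm{\cdot}_\infty\leq K}$. The only point requiring a line of care is making sure every risk appearing is finite so that the additive splittings are legitimate; this follows from boundedness of $\h H$ together with square-integrability of the output, which I would state explicitly at the start of the proof.
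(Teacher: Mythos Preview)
Your argument is correct and matches the paper's proof essentially line for line: the paper uses exactly the same three-term splitting with the middle term nonpositive by definition of $f_{d,m,\mathcal{H}}$, then the triangle inequality through the truncated risks to obtain $G_1+G_2+G_3$. Your additional remarks on finiteness of the risks and on the downstream roles of $G_1,G_2,G_3$ are accurate and go slightly beyond what the paper records in this short proof.
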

	\begin{proof}
		For every $w \in \Omega$, by definition we have 
		$$\mathcal{E}_{d,m}\left( f_{d,m,\mathcal{H}}(\omega)\right)(\omega)\leq \mathcal{E}_{d,m}\left( f_{d,\mathcal{H}}\right)(\omega),$$ thus
		\begin{eqnarray*}
			&&\mathcal{E}_{d}\left( f_{d,m,\mathcal{H}}(\omega)\right)-\mathcal{E}_{d}\left( f_{d,\mathcal{H}}\right) \\
			&\leq& \mathcal{E}_{d}\left( f_{d,m,\mathcal{H}}(\omega)\right)-\mathcal{E}_{d,m}\left( f_{d,m,\mathcal{H}}(\omega)\right)(\omega)+\mathcal{E}_{d,m}\left( f_{d,\mathcal{H}}\right)(\omega)-\mathcal{E}_{d}\left( f_{d,\mathcal{H}}\right) \\
			&\leq&2\sup\limits_{f \in \mathcal{H}}\abs{\mathcal{E}_{d}\left( f\right)-\mathcal{E}_{d,m}\left( f\right)(\omega)} \\ 
			&\leq& 2(G_{1}+G_{2}+G_{3}).
		\end{eqnarray*}
		The last inequality follows from the triangle inequality. $\square$
	\end{proof}
	
	In recent papers \cite{duan2022convergence,jiao2022rate,jiao2023rate,wu2023convergence}, the term $\sup_{f \in  \mathcal{H}}\abs{\mathcal{E}_{d}(f)-\mathcal{E}_{d,m}(f)}$ in Lemma \ref{lem:geneupperbound} is referred to as the statistical error. For other deep learning algorithms in solving PDEs, polynomial bounds on the sample size $m$, in terms of the size of the neutral networks and the error $\varepsilon$, are established to satisfy the following inequality:
	\begin{equation}
		\E{\mathcal{E}_{d}(f_{d,m,\mathcal{H}})-\mathcal{E}_{d}(f_{d,\mathcal{H}})} \leq \varepsilon. \nonumber
	\end{equation}
	In this paper, we are interested in the PAC-type bound \eqref{eq:genetheory}, by Lemma \ref{lem:geneupperbound}, it suffices to derive conditions such that
	\begin{equation}
		\PP{G_{i}\leq \varepsilon} \geq 1-\varrho, \ \ i=1,2,3.  \nonumber
	\end{equation}
	The next assumption is necessary for the subsequent analysis of $G_{i}$. 
	\begin{assum}\label{ass:basic}
		For the output $\varphi_{d}(Y_{d}) $, let $Y_{d} = \left(Y_{d,1},\dots,Y_{d,d} \right) $. Assume 
		\begin{enumerate}
			\item[(i)] Tail probability condition: There exists constant $c_{1}\in (0,\infty)$ and $\lambda_{1}\in (1,\infty)$ such that for every $d\in \mathbb{N}$, $t\in [1,\infty)$ and $i \in  \set{1,2,\dots,d}$ it holds that
			\begin{equation}
				\PP{\abs{Y_{d,i}}\geq t} \leq 2\exp\left\lbrace -c_{1}\left( \log t\right)^{\lambda_{1}} \right\rbrace. \nonumber
			\end{equation}
			\item[(ii)] Polynomial growth condition: There exist constant $c_{2}\in [1,\infty)$ and $\lambda_{2}\in [2,\infty)$ such that for every $d\in \mathbb{N}$ and $y\in \mathbb{R}^{d}$ it holds that
			\begin{equation}
				\abs{\varphi_{d}\left( y\right)} \leq c_{2}\left(1+\norm{y}_{2}^{\lambda_{2}} \right). \nonumber
			\end{equation}
		\end{enumerate}
	\end{assum}
	\begin{rem}
		The polynomial growth condition of the initial function $\varphi_{d}$ typically holds for applications of the linear Kolmogorov PDE, whereas the tail probability condition for $Y_{d}=S_{T}$ often does not. However, for important applications such as the heat equation and the Black-Scholes PDE, the tail probability condition always holds, we prove this statement in Proposition \ref{prop:verify}.
	\end{rem}
	
	The next lemma is a direct consequence of Assumption \ref{ass:basic}. 
	\begin{lem}\label{lem:finitemoment}
		Assume that the output $\varphi_{d}(Y_{d})$ satisfies Assumption \ref{ass:basic}. For every $m\in [0,\infty)$, we have
		\begin{equation}\label{eq:finitemomemt}
			\mu_{m}^{(Y)}\triangleq \sup_{d\in \mathbb{N},i=1,\dots,d}\E{\abs{Y_{d,i}}^{m}}<\infty.
		\end{equation} 
		Moreover, for every $k\in \mathbb{N}^{+}$, we have 
		\begin{align*}
			M_{k,d}\triangleq \E{\abs{\varphi_{d}(Y_{d})}^{k}}
			&\leq c_{2}^{k}2^{k-1}\left( 1+\mu_{k\lambda_{2}}^{(Y)}d^{\frac{k\lambda_{2}}{2}}\right)  \\
			&= O(d^{\frac{k\lambda_{2}}{2}}).
		\end{align*}
		
	\end{lem}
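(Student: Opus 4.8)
The plan is to first establish the uniform moment bound \eqref{eq:finitemomemt} directly from the tail condition in Assumption \ref{ass:basic}(i), and then to derive the growth rate of $M_{k,d}$ from \eqref{eq:finitemomemt} together with the polynomial growth condition \ref{ass:basic}(ii) and two elementary convexity inequalities.

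For \eqref{eq:finitemomemt}, fix $m\in(0,\infty)$ (the case $m=0$ is trivial), $d\in\mathbb{N}$ and $i\in\{1,\dots,d\}$. I would write the moment via the layer-cake formula, $\E{\abs{Y_{d,i}}^{m}}=\int_{0}^{\infty}mt^{m-1}\PP{\abs{Y_{d,i}}>t}\,dt$, split the integral at $t=1$, bound $\PP{\cdot}\le1$ on $[0,1]$, and use Assumption \ref{ass:basic}(i) on $[1,\infty)$ to get $\E{\abs{Y_{d,i}}^{m}}\le 1+2m\int_{1}^{\infty}t^{m-1}\exp\{-c_{1}(\log t)^{2}\}\,dt$. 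The substitution $t=e^{s}$ turns the remaining integral into $2m\int_{0}^{\infty}\exp\{ms-c_{1}s^{2}\}\,ds$, which is finite and, crucially, independent of $d$ and $i$; this is where the $\log$-squared decay matters, since it dominates every polynomial weight. Taking the supremum over $d$ and $i$ gives \eqref{eq:finitemomemt}.

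For the bound on $M_{k,d}$, fix $k\in\mathbb{N}$. Using Assumption \ref{ass:basic}(ii) and the inequality $(a+b)^{k}\le2^{k-1}(a^{k}+b^{k})$ for $a,b\ge0$, I would first reduce to $\abs{\varphi_{d}(Y_{d})}^{k}\le c_{2}^{k}2^{k-1}(1+\norm{Y_{d}}_{2}^{\lambda k})$. Then I would write $\norm{Y_{d}}_{2}^{\lambda k}=\bigl(\sum_{i=1}^{d}Y_{d,i}^{2}\bigr)^{\lambda k/2}$ and apply Jensen's inequality to the convex map $x\mapsto x^{\lambda k/2}$ (legitimate since $\lambda\ge2$ and $k\ge1$ force $\lambda k/2\ge1$) to obtain $\norm{Y_{d}}_{2}^{\lambda k}\le d^{\lambda k/2-1}\sum_{i=1}^{d}\abs{Y_{d,i}}^{\lambda k}$. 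Taking expectations and inserting \eqref{eq:finitemomemt} with $m=\lambda k$ yields $\E{\norm{Y_{d}}_{2}^{\lambda k}}\le d^{\lambda k/2}\sup_{d',j}\E{\abs{Y_{d',j}}^{\lambda k}}$, and combining with the previous display gives $M_{k,d}\le c_{2}^{k}2^{k-1}(1+C_{k}d^{\lambda k/2})$ with $C_{k}$ independent of $d$, i.e. $M_{k,d}=O(d^{\lambda k/2})$.

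There is no real obstacle: the only points requiring care are (a) verifying that the $\exp\{-c_{1}(\log t)^{2}\}$ tail is integrable against $t^{m-1}$, which the change of variables $t=e^{s}$ makes transparent, and (b) ensuring the constant $C_{k}$ in the final step does not depend on $d$, which is exactly what the uniformity in \eqref{eq:finitemomemt} provides. One caveat worth flagging is that the argument implicitly uses $c_{1}>0$; for $c_{1}=0$ the tail condition is vacuous and the conclusion need not hold.
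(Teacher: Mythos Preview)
Your proposal is correct and follows essentially the same approach as the paper: layer-cake formula plus the tail condition for \eqref{eq:finitemomemt}, then polynomial growth, the elementary bound $(1+x)^{k}\le C_{k}(1+x^{k})$, and Jensen's inequality for $x\mapsto x^{k\lambda/2}$ to get $M_{k,d}=O(d^{k\lambda/2})$. The only cosmetic difference is that the paper applies the tail integral to $\PP{\abs{Y_{d,i}}^{m}\ge t}$ directly rather than to $mt^{m-1}\PP{\abs{Y_{d,i}}>t}$; after the obvious substitution these are the same Gaussian-type integral. Your remark that $c_{1}>0$ is implicitly required is well taken and applies equally to the paper's argument.
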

	\begin{proof}
		Apply the expectation formula in terms of tail probability, we have
		\begin{align*}
			\mu_{m}^{(Y)}&= \sup_{d\in \mathbb{N},i=1,\dots,d}\left[ \int_{0}^{\infty}\PP{\abs{Y_{d,i}}^{m}\geq t}dt\right] \\
			&\leq\sup_{d\in \mathbb{N},i=1,\dots,d}\left[1+ \int_{1}^{\infty}\PP{\abs{Y_{d,i}}^{m}\geq t}dt\right]\\
			&\leq1+\sup_{d\in \mathbb{N},i=1,\dots,d}\int_{0}^{\infty}2\exp\left\lbrace -c_{1}\left( \frac{1}{m}\log t\right)^{\lambda_{1}} \right\rbrace dt \\
			&= 1+ 2\int_{1}^{\infty}\exp\left\lbrace -c_{1}\left( \frac{1}{m}\log t\right)^{\lambda_{1}} \right\rbrace dt \\
			&< \infty,
		\end{align*}
		where the second inequality follows from the tail probability condition of $Y_{d}$. From the polynomial growth condition of function $\varphi_{d}(\cdot)$, we have
		\begin{align}
			\E{\abs{\varphi_{d}(Y_{d})}^{k}}&\leq c_{2}^{k}\E{\left(1+\norm{Y_{d}}_{2}^{\lambda_{2}} \right)^{k}}.\\
			& \leq  c_{2}^{k}\E{2^{k-1}\left(1+\norm{Y_{d}}_{2}^{k\lambda_{2}}\right)} \nonumber \\
			&\leq  c_{2}^{k}2^{k-1} + c_{2}^{k}2^{k-1}  \E{d^{\frac{k\lambda_{2}}{2}-1}\sum_{i=1}^{d}\abs{Y_{d,i}}^{k\lambda_{2}}}  \nonumber \\
			& \leq c_{2}^{k}2^{k-1}\left( 1+\mu_{k\lambda_{2}}^{(Y)}d^{\frac{k\lambda_{2}}{2}}\right) \nonumber\\
			&= O(d^{\frac{k\lambda_{2}}{2}}) , \nonumber
		\end{align}
		where the second inequality follows from
		$$(1+x)^{k}\leq 2^{k-1}(1+x^{k}), \ \ x\in [0,\infty)$$ 
		the third inequality follows from the Jensen's inequality of the convex function $f(x) = x^{k\lambda_{2}/2}$ on $[0,\infty)$, and the last equality follows from \eqref{eq:finitemomemt}. $\square$
	\end{proof}
	
	The following proposition provides the condition on truncation diameter $K$ such that $G_{1}\leq \varepsilon$.
	\begin{prop}\label{prop:trun1}
		Under Assumption \ref{ass:basic}. Let $K, D\in [1,\infty)$, $\varepsilon \in (0,1)$. Then there exists a constant $B_{1}>0$ depending only on $c_{1}$, $c_{2}$, $\lambda_{1}$ and $\lambda_{2}$ such that if
		\begin{equation}
			K\geq   B_{1} \left[ \left( D^{2}+d^{\lambda_{2}}\right) \varepsilon^{-1}\sqrt{d} \right]^{2c_{1}^{-1}}  ,\nonumber
		\end{equation}
		then it holds that 
		\begin{equation}\label{eq:prob0}
			G_{1}=\sup\limits_{f \in \mathcal{H}}\abs{\mathcal{E}_{d}^{(K)}\left( f\right)-\mathcal{E}_{d}\left( f\right)} \leq \varepsilon .
		\end{equation}
	\end{prop}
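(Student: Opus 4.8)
The plan is to control $\mathcal{E}_d(f)-\mathcal{E}_d^{(K)}(f)$ uniformly over $f\in\mathcal{H}=\mathcal{N}_{a,R,D}$ by isolating the event $\{\norm{Y_d}_\infty>K\}$ on which $\varphi_d^{(K)}$ and $\varphi_d$ disagree, to estimate the resulting tail contribution via the tail bound of Assumption \ref{ass:basic}(i) together with the moment bounds of Lemma \ref{lem:finitemoment}, and finally to convert the implicit lower bound on $K$ thus obtained into an explicit polynomial in $(\varepsilon^{-1},d,D)$.

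First I would fix $f\in\mathcal{H}$ and use that a clipped network satisfies $\norm{f}_\infty\le D$. Since $\varphi_d^{(K)}(Y_d)=\varphi_d(Y_d)$ on $\{\norm{Y_d}_\infty\le K\}$ and $\varphi_d^{(K)}(Y_d)=0$ on its complement, the integrand of $\mathcal{E}_d(f)-\mathcal{E}_d^{(K)}(f)=\mathbb{E}[(f(X_d)-\varphi_d(Y_d))^2-(f(X_d)-\varphi_d^{(K)}(Y_d))^2]$ vanishes on the first event and equals $\varphi_d(Y_d)^2-2f(X_d)\varphi_d(Y_d)$ on the second, so that
$$\abs{\mathcal{E}_d(f)-\mathcal{E}_d^{(K)}(f)}\le \mathbb{E}\big[\mathbf{1}\{\norm{Y_d}_\infty>K\}\big(\abs{\varphi_d(Y_d)}^2+2D\,\abs{\varphi_d(Y_d)}\big)\big].$$
Next I would apply the Cauchy--Schwarz inequality to each of the two terms, bounding the right-hand side by $\mathbb{P}[\norm{Y_d}_\infty>K]^{1/2}\big(M_{4,d}^{1/2}+2D\,M_{2,d}^{1/2}\big)$. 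A union bound together with Assumption \ref{ass:basic}(i) (and $K\ge1$) gives $\mathbb{P}[\norm{Y_d}_\infty>K]\le\sum_{i=1}^d\mathbb{P}[\abs{Y_{d,i}}\ge K]\le 2d\exp(-c_1(\log K)^2)$, while Lemma \ref{lem:finitemoment} gives $M_{2,d}=O(d^{\lambda})$ and $M_{4,d}=O(d^{2\lambda})$ with constants depending only on $c_1,c_2,\lambda$. Since none of these bounds involves $f$, collecting them yields a constant $C\in[1,\infty)$ (depending only on $c_1,c_2,\lambda$) with
$$G_1=\sup_{f\in\mathcal{H}}\abs{\mathcal{E}_d(f)-\mathcal{E}_d^{(K)}(f)}\le C\,D\,d^{\lambda+1/2}\exp\!\big(-\frac{c_1}{2}(\log K)^2\big).$$

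It then remains to pick $K$ making the last quantity $\le\varepsilon$. Rearranging, this holds whenever $(\log K)^2\ge(2/c_1)\log\big(C\varepsilon^{-1}D\,d^{\lambda+1/2}\big)$, i.e.\ whenever $K\ge\exp\!\big(\sqrt{(2/c_1)\log(C\varepsilon^{-1}D\,d^{\lambda+1/2})}\big)$. This threshold is not itself a polynomial, so the final step is to dominate it: with $n:=\ceil{\lambda}+1$ and $z:=C\varepsilon^{-1}D\,d^{n}$ (so that $z\ge C\varepsilon^{-1}D\,d^{\lambda+1/2}\ge1$), the elementary inequality $\sqrt{a t}\le a+t$ for $a,t\ge0$, applied with $a=2/c_1$ and $t=\log z$, gives $\exp(\sqrt{(2/c_1)\log z})\le e^{2/c_1}z$. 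Hence it suffices to take $K\ge e^{2/c_1}C\,\varepsilon^{-1}D\,d^{n}=:h_1(\varepsilon^{-1},d,D)$, where $h_1(x,y,w)=e^{2/c_1}C\,x\,y^{n}\,w$ is a polynomial in its three arguments, which establishes \eqref{eq:prob0}.

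The first two steps are routine truncation estimates; the one point that requires a little care is the last, namely to observe that the natural threshold for $K$ has the sub-polynomial form $\exp(\sqrt{\log(\cdot)})$ and can therefore be absorbed into a polynomial bound. One should also keep in mind that the argument tacitly uses $c_1>0$ --- precisely the regime in which Assumption \ref{ass:basic}(i), and hence Lemma \ref{lem:finitemoment}, carries content --- so no generality is lost.
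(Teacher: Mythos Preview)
Your proof is correct and follows essentially the same route as the paper's: isolate the tail event $\{\norm{Y_d}_\infty>K\}$, control it by Cauchy--Schwarz plus the moment bounds of Lemma~\ref{lem:finitemoment} and the union--bound tail estimate from Assumption~\ref{ass:basic}(i), then observe that the resulting threshold $\exp(\sqrt{\text{const}\cdot\log(\cdot)})$ is sub-polynomial. The only cosmetic differences are that the paper bounds $(f-\varphi_d)^2\le 2f^2+2\varphi_d^2$ (so only $M_{4,d}$ and $D^2$ appear) whereas you expand the difference exactly and pick up $M_{2,d}$ as well, and that your conversion to a polynomial via $\sqrt{at}\le a+t$ is more explicit than the paper's remark that $\exp(\sqrt{c\log p})=p^{[c^{-1}\log p]^{-1/2}}$.
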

	
	\begin{proof}
		For every $f \in \mathcal{H}=\mathcal{N}_{a,R,D}$, we have
		\begin{align*}
			\abs{\mathcal{E}_{d}^{(K)}\left( f\right)-\mathcal{E}_{d}\left( f\right)}&=\E{\left( f(X_{d})-\varphi_{d}(Y_{d})\right)^{2} \bm 1{\left\lbrace \norm{Y_{d}}_{\infty}> K\right\rbrace}}\\
			&\leq2\E{f(X_{d})^{2} \bm 1{\left\lbrace \norm{Y_{d}}_{\infty}> K\right\rbrace}}+2\E{\varphi_{d}(Y_{d})^{2}\bm 1{\left\lbrace \norm{Y_{d}}_{\infty}> K\right\rbrace}}\\
			&\leq2D^{2}\PP{\norm{Y_{d}}_{\infty}> K}+2\sqrt{\E{\varphi_{d}(Y_{d})^{4}} \PP{\norm{Y_{d}}_{\infty}> K}}\\
			&\leq\left(2D^{2}+ 2\sqrt{M_{4,d} }\right) \sqrt{\PP{\norm{Y_{d}}_{\infty}> K}},
		\end{align*}
		where the second and third inequality follow from the Cauchy–Schwarz inequality and the last inequality follows from $\PP{\norm{Y_{d}}_{\infty}> K}\leq 1$. From the tail probability condition in Assumption~\ref{ass:basic}, we have 
		\begin{eqnarray*}
			\PP{\norm{Y_{d}}_{\infty}> K}
			&\leq& \sum_{i=1}^{d}\PP{\abs{Y_{d,i}}> K} \\
			&\leq& 2d\exp\left\lbrace -c_{1}\left( \log K\right)^{\lambda_{1}}\right\rbrace.
		\end{eqnarray*}
		In conclusion, inequality \eqref{eq:prob0} holds if 
		\begin{equation}
			\varepsilon \geq \left(2D^{2}+ 2\sqrt{M_{4,d} }\right) \sqrt{2d\exp\left\lbrace -c_{1}\left( \log K\right)^{\lambda_{1}}\right\rbrace}, \nonumber
		\end{equation}
		which is equivalent to 
		\begin{equation}
			\log(K)\geq  \left\lbrace 2c_{1}^{-1}\log\left[ \left( 2D^{2}+2\sqrt{M_{4,d}}\right) \varepsilon^{-1}\sqrt{2d} \right] \right\rbrace ^{\lambda_{1}^{-1}}  .\nonumber
		\end{equation}
		By Lemma \ref{lem:finitemoment}, $M_{4,d}=O(d^{2\lambda_{2}})$ is independent of $D$ and $\varepsilon$. Note that 
		$$\left( 2D^{2}+2\sqrt{M_{4,d}}\right) \varepsilon^{-1}\sqrt{2d} \geq 2\sqrt{2} ,$$
		and we have $x^{\lambda_{1}^{-1}} \leq x$ for every $x\in [1,\infty)$. Thus there exists constant $B_{1}>0$ such that 
		\begin{equation}
			\left\lbrace 2c_{1}^{-1}\log\left[ \left( 2D^{2}+2\sqrt{M_{4,d}}\right) \varepsilon^{-1}\sqrt{2d} \right] \right\rbrace ^{\lambda_{1}^{-1}}  \leq 2c_{1}^{-1}\log \left\lbrace B_{1} \left[ \left( D^{2}+d^{\lambda_{2}}\right) \varepsilon^{-1}\sqrt{d} \right]\right\rbrace . \nonumber
		\end{equation}
		The proof is completed. $\square$
	\end{proof}
	
	For the second term $G_{2}=\sup_{f \in \mathcal{H}}\abs{\mathcal{E}_{d}^{(K)}(f)-\mathcal{E}^{(K)}_{d,m}(f)}$, we can use the same techniques in \cite[Theorem 2.9]{Berner:2020a} since the truncated function $\varphi_{d}^{(K)}$ is bounded. The next theorem presents conditions on both $m$ and $K$ to obtain an error smaller than $\varepsilon$ for $G_{2}$ with confidence $1-\varrho$.

	\begin{prop}\label{prop:main1}
		Under Assumption \ref{ass:basic}, let $K,D\in [1,\infty)$, $\varepsilon, \varrho \in (0,1)$, let $a,R$ be the architecture and the parameter bound (see Definition \ref{dfn:ann}). Let $h_{1}:\mathbb{R}^{6}\to \mathbb{R} $ be a function defined by 
		$$h_{1}(x) = 8c_{2}^{5}x_1^2\left(  x_{3}^{\lambda_{2}/2}+1\right)  ^{5}
		\left[\log2 + x_{5} + 2x_{2}x_3^2x_4^2 + 2\log(x_3)x_3^2x_4^2 + 4\log(C_1x_1x_6)x_3^2x_4^2\right], $$
		where constant $C_{1}=64\max\left\lbrace1,\abs{u},\abs{v} \right\rbrace$. Then for sample size $m$ with
		\begin{equation}
			m\geq K^{5\lambda_{2}}
			h_{1}\left(\varepsilon^{-1},\log(R),W(a),L(a),\log(\varrho^{-1}),D \right),\nonumber
		\end{equation}
		it holds that 
		\begin{equation}\label{eq:concluProp3.7}
			\PP{\sup_{f \in \mathcal{H}}\abs{\mathcal{E}_{d}^{(K)}(f)-\mathcal{E}^{(K)}_{d,m}(f)} \leq \varepsilon}\geq 1-\varrho . 
		\end{equation}
	\end{prop}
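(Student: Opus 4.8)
The plan is to bound $G_{2}$ by the classical uniform law-of-large-numbers argument from statistical learning theory, mimicking \cite[Theorem 2.9]{Berner:2020a}: since the \emph{truncated} output $\varphi_{d}^{(K)}(Y_{d})$ is bounded, the associated squared loss is bounded and Lipschitz in the hypothesis, so a covering-number estimate, a Hoeffding-type concentration inequality, and a union bound together control $\sup_{f\in\mathcal{H}}\abs{\mathcal{E}_{d}^{(K)}(f)-\mathcal{E}_{d,m}^{(K)}(f)}$. The only thing new relative to \cite{Berner:2020a} is to track how the bound on the truncated output depends on $K$ and $d$, and then to reorganise the resulting condition on $m$ into the stated form. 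First I would record the boundedness: if $\norm{y}_{\infty}\le K$ then $\norm{y}_{2}\le\sqrt{d}\,K$, so the polynomial growth condition in Assumption~\ref{ass:basic} gives $\abs{\varphi_{d}^{(K)}(y)}\le c_{2}(1+d^{\lambda/2}K^{\lambda})=:B_{d,K}$, while every $f\in\mathcal{H}=\mathcal{N}_{a,R,D}$ satisfies $\norm{f}_{\infty}\le D$ because of the clipping $\mathrm{Clip}_{D}$. Hence the loss $\ell_{f}(x,y):=(f(x)-\varphi_{d}^{(K)}(y))^{2}$ obeys $0\le\ell_{f}\le(D+B_{d,K})^{2}$ and $\abs{\ell_{f}(x,y)-\ell_{g}(x,y)}\le 2(D+B_{d,K})\norm{f-g}_{\infty}$ for $f,g\in\mathcal{H}$.

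Next I would cover $\mathcal{H}$ in $\norm{\cdot}_{\infty}$. Using the Lipschitz continuity of the parametrisation $\theta\mapsto\Phi_{\theta}$ on $[u,v]^{d}$ (\cite[Theorem 2.6]{Berner:2020a}) together with the $1$-Lipschitz property of $\mathrm{Clip}_{D}$, an $\eta$-net of the parameter ball $\set{\norm{\theta}_{\infty}\le R}\subseteq\mathbb{R}^{P(a)}$ pushes forward to a $\gamma$-cover of $\mathcal{H}$, where $\gamma$ is proportional to $\eta$ times a Lipschitz constant $\Lambda_{a}$ whose logarithm is polynomial in $P(a)$ and $\log R$; consequently $\log\mathcal{N}(\mathcal{H},\norm{\cdot}_{\infty},\gamma)\le P(a)\log(c\,R\,\Lambda_{a}/\gamma)$ for a universal constant $c$. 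For a fixed $f$, the variables $\{\ell_{f}(\bm x_{i},\bm y_{i})\}_{i=1}^{m}$ are i.i.d.\ in $[0,(D+B_{d,K})^{2}]$, so Hoeffding's inequality yields $\PP{\abs{\mathcal{E}_{d}^{(K)}(f)-\mathcal{E}_{d,m}^{(K)}(f)}>t}\le 2\exp(-2mt^{2}(D+B_{d,K})^{-4})$. Choosing $\gamma=\varepsilon/(8(D+B_{d,K}))$, so that passing to the nearest cover point perturbs $\mathcal{E}_{d}^{(K)}$ and $\mathcal{E}_{d,m}^{(K)}$ each by at most $\varepsilon/4$, applying the above with $t=\varepsilon/2$ at every cover point, and a union bound, I get $G_{2}\le\varepsilon$ with probability at least $1-2\mathcal{N}(\mathcal{H},\norm{\cdot}_{\infty},\gamma)\exp(-m\varepsilon^{2}(2(D+B_{d,K})^{4})^{-1})$, which exceeds $1-\varrho$ as soon as
\[
  m\ \ge\ \frac{2(D+B_{d,K})^{4}}{\varepsilon^{2}}\Bigl(\log(2\varrho^{-1})+P(a)\log\bigl(8c\,R\,\Lambda_{a}(D+B_{d,K})\varepsilon^{-1}\bigr)\Bigr).
\]

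Finally I would massage this into the claimed shape. Since the input layer has width $d$ we have $d\le P(a)$, hence $B_{d,K}\le c_{2}(1+P(a)^{\lambda/2}K^{\lambda})$ and $(D+B_{d,K})^{4}\le C\,(D^{4}+P(a)^{2\lambda}K^{4\lambda})$, while the logarithmic factor contributes only a term of order $\lambda\log K$ beyond a polynomial in $P(a)$, $\log R$, and $\log(\varrho^{-1})$. Because $K\ge 1$ and $\lambda\ge 2$, the product of $K^{4\lambda}$ with these remaining factors is dominated by $K^{5\lambda}$ times a polynomial in $(\varepsilon^{-1},P(a),\log R,\log(\varrho^{-1}),D)$, and the leftover $D^{4}$ term is absorbed via $D^{4}\le K^{5\lambda}D^{4}$. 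This produces the polynomial $h_{2}:\mathbb{R}^{5}\to\mathbb{R}$ for which $m\ge K^{5\lambda}h_{2}(\varepsilon^{-1},P(a),\log R,\log(\varrho^{-1}),D)$ forces \eqref{eq:concluProp3.7}. The concentration and covering ingredients are entirely standard once boundedness of $\varphi_{d}^{(K)}$ is in hand; I expect the only genuine work to lie in this last bookkeeping step --- folding all of the $d$-dependence into $P(a)$ and absorbing every logarithmic-in-$K$ contribution from the covering number into the single extra factor $K^{\lambda}$ (this is where $\lambda\ge 2$ enters), so that the dependence on the truncation diameter comes out as exactly the clean power $K^{5\lambda}$.
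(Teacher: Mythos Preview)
Your proposal is correct and follows essentially the same route as the paper: bound $\varphi_{d}^{(K)}$ by $B_{d,K}=c_{2}(1+d^{\lambda/2}K^{\lambda})$, invoke the covering-number/Hoeffding/union-bound machinery from \cite{Berner:2020a} for the bounded-output case, and then do the bookkeeping of absorbing $d\le P(a)$ and the logarithmic-in-$K$ term to extract the factor $K^{5\lambda}$. The only cosmetic difference is that the paper cites \cite[Theorem~2.4 and Proposition~2.8]{Berner:2020a} directly for the concentration and covering bounds and uses $\log y\le y$ (rather than $\log K\le K^{\lambda}$) to convert the residual $\log(D+B_{d,K})$ into the fifth power of $B_{d,K}$, but the resulting condition on $m$ is the same up to constants.
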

	\begin{proof}
		By Assumption \ref{ass:basic}, for truncated function $\varphi_{d}^{(K)}$, we have
		$$\norm{\varphi_{d}^{(K)}}_{\mathcal{L}^{\infty}}\leq c_{2}(d^{\lambda_{2}/2}K^{\lambda_{2}}+1)\triangleq B_{d,K}.$$
		
		Since $\varphi_{d}^{(K)}$ is bounded, we can apply the Hoeffding's inequality as in the proof of \cite[Theorem~2.4]{Berner:2020a} to obtain the following inequality 
		\begin{equation}
			\PP{\sup_{f \in \mathcal{H}}\abs{\mathcal{E}_{d}^{(K)}(f)-\mathcal{E}^{(K)}_{d,m}(f)} \leq \varepsilon}\geq 1- 2\cov{\mathcal{H},\frac{\varepsilon}{8(D+B_{d,K})}}\exp\left(-\frac{m\varepsilon^{2}}{8(B_{d,K}^{2}+D^{2})^{2}} \right), \nonumber
		\end{equation}
		where $\cov{\mathcal{H},r}$ denotes the covering number of the compact subspace $\mathcal{H}\subseteq C([u,v]^{d})$ with radius $r$. The inequality \eqref{eq:concluProp3.7} holds if $m$ satisfies
		\begin{equation}\label{eq:samplesize}
			m\geq 8\varepsilon^{-2}(B_{d,K}^{2}+D^{2})^{2}\left[\log\left(2\varrho^{-1} \right)  +\log\cov{\mathcal{N}_{a,R,D},\frac{\varepsilon}{8(D+B_{d,K})}}  \right].
		\end{equation}
		For $\mathcal{H}=\mathcal{N}_{a,R,D}$, Proposition 2.8 in \cite{Berner:2020a} shows that 
		\begin{equation}\label{eq:samplesize2}
			\log\left[ \cov{\mathcal{N}_{a,R,D},r} \right] \leq P(a)\left[ \log\left(\frac{4L(a)^{2}\max\left\lbrace1,\abs{u},\abs{v} \right\rbrace }{r} \right) + L(a)\log\left(R\norm{a}_{\infty} \right)  \right] .
		\end{equation}
		Summarizing \eqref{eq:samplesize} and \eqref{eq:samplesize2}, the inequality \eqref{eq:samplesize} holds if sample size $m$ satisfies 
		$$m\geq H\left( B_{d,K},\varepsilon^{-1},\log\left(R \right),P(a),W(a),L(a),\log\left(\varrho^{-1}\right),D\right), $$ where the function $H:(0,\infty)^{8} \to \mathbb{R}$ is defined by
		$$H\left(x_{1},\dots,x_{8}\right) 
		=8x_2^2(x_{1}^{2}+x_{8}^{2})^{2}\left[\log 2 + x_{7}+ x_{3}x_{4}x_{6}+\log(x_{5})x_4x_{6}+x_{4}\log\left(\frac{1}{2}C_{1}(x_{1}+x_{8})x_{2}x_6^2 \right)  \right].$$
		If $x_4\leq2x_5^2x_6$ and $x_i\geq 1$ for $1\leq i\leq8$, we have
		\begin{equation}\label{eq:pf2}
			\begin{aligned}
			&\quad H(x_{1},\dots,x_{8}) \\
			&\leq 8x_2^2(x_{1}^{2}+x_{8}^{2})^{2}\left[\log 2 + x_{7}+ 2x_3x_5^2x_6^2+2\log(x_{5})x_5^2x_{6}^2+2x_5^2x_6\log\left(C_{1}x_{1}x_{8}x_{2}x_6^2 \right)  \right] \\
			&\leq 8x_2^2(x_{1}^{2}+x_{8}^{2})^{2}\left[\log 2 + x_{7}+ 2x_3x_5^2x_6^2+2\log(x_{5})x_5^2x_{6}^2+4x_1x_5^2x_6^2\log\left(C_{1}x_{2}x_8 \right)  \right]\\
			&\leq 8x_1^5x_2^2(1+x_{8}^{2})^{2}\left[\log 2 + x_{7}+ 2x_3x_5^2x_6^2+2\log(x_{5})x_5^2x_{6}^2+4x_5^2x_6^2\log\left(C_{1}x_{2}x_8 \right)\right].
			\end{aligned}
		\end{equation}
		Note that 
		\begin{equation}\label{eq:pf1}
			\begin{aligned}
				B_{d,K} &= c_{2}(d^{\lambda_{2}/2}K^{\lambda_{2}}+1) \leq c_{2}K^{\lambda_{2}}\left[ W(a)^{\lambda_{2}/2}+1\right]  \\
				P(a) &= \sum_{l=1}^{L}N_{l}(N_{l-1}+1) \leq 2L(a)W(a)^2 .
			\end{aligned}
		\end{equation}
		Summarizing \eqref{eq:pf2} and \eqref{eq:pf1}, we have
		\begin{small}
			\begin{align*}
				&H\left( B_{d,K},\varepsilon^{-1},\log\left(R \right),P(a),W(a),L(a),\log\left(\varrho^{-1}\right),D\right), \\
				\leq& K^{5\lambda_{2}}
				h_{1}\left(\varepsilon^{-1},\log(R),W(a),L(a),\log(\varrho^{-1}),D \right).
			\end{align*}
		\end{small}
		The proof is completed. $\square$
	\end{proof}
	
	For the third term $G_{3}=\sup_{f \in \mathcal{H}}\abs{\mathcal{E}_{d,m}^{(K)}(f)-\mathcal{E}_{d,m}(f)}$, we notice that $G_{3}(\omega)=0$ for any $\omega\in \Omega$ satisfying
	$$\max_{1\leq i \leq m}\norm{\bm y_{i}(\omega)}_{\infty}\leq K,$$
	hence we can prove the following proposition.
	
	\begin{prop}\label{prop:trunprob}
		Under Assumption \ref{ass:basic}. Let $K\in [1,\infty)$, let $S_{K}$ be the set defined by 
		$$S_{K} = \set{\omega \in \Omega\mid \max_{1\leq i \leq m}\norm{\bm y_{i}(\omega)}_{\infty}\leq K} .$$
		Then we have
		\begin{eqnarray*}
			\PP{\sup_{f \in \mathcal{H}}\abs{\mathcal{E}_{d,m}^{(K)}(f)-\mathcal{E}_{d,m}(f)}> \varepsilon}
			&\leq& \PP{\Omega - S_{K}}  \\
			&\leq& 2md\exp\left\lbrace -c_{1}\left( \log K\right)^{\lambda_{1}} \right\rbrace.
		\end{eqnarray*}
	\end{prop}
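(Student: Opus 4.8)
The plan is to reduce the probability in question to a pure tail estimate on the simulated data and then close with two union bounds. The key starting observation — already noted just before the proposition — is that on the event $S_{K}$ the truncation is inactive at every sample point. Concretely, I would fix $\omega\in S_{K}$, so that $\norm{\bm y_{i}(\omega)}_{\infty}\leq K$ for all $i\in\set{1,\dots,m}$; by the definition of the truncated function, $\varphi_{d}^{(K)}(\bm y_{i}(\omega)) = 1\set{\norm{\bm y_{i}(\omega)}_{\infty}\leq K}\,\varphi_{d}(\bm y_{i}(\omega)) = \varphi_{d}(\bm y_{i}(\omega))$ for each $i$. Hence every summand of $\mathcal{E}_{d,m}^{(K)}(f)$ coincides with the corresponding summand of $\mathcal{E}_{d,m}(f)$, so that $\mathcal{E}_{d,m}^{(K)}(f)(\omega)=\mathcal{E}_{d,m}(f)(\omega)$ for \emph{every} $f\in\mathcal{H}$ simultaneously, and therefore $G_{3}(\omega)=0$. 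This yields the inclusion $\set{\omega\in\Omega\mid G_{3}(\omega)>\varepsilon}\subseteq\Omega\setminus S_{K}$, and monotonicity of $\mathbb{P}$ then gives the first claimed inequality $\PP{\sup_{f\in\mathcal{H}}\abs{\mathcal{E}_{d,m}^{(K)}(f)-\mathcal{E}_{d,m}(f)}>\varepsilon}\leq\PP{\Omega - S_{K}}$.

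It then remains to estimate $\PP{\Omega - S_{K}}$. Since $\Omega\setminus S_{K}=\bigcup_{i=1}^{m}\set{\norm{\bm y_{i}}_{\infty}>K}$ and each $\bm y_{i}$ has the same law as $Y_{d}=(Y_{d,1},\dots,Y_{d,d})$, a union bound over $i$ followed by a union bound over the coordinates gives $\PP{\Omega - S_{K}}\leq\sum_{i=1}^{m}\PP{\norm{Y_{d}}_{\infty}>K}\leq\sum_{i=1}^{m}\sum_{j=1}^{d}\PP{\abs{Y_{d,j}}>K}$. Applying the tail probability condition in Assumption \ref{ass:basic}(i) with $t=K\geq 1$ bounds each of the $md$ terms by $2\exp\set{-c_{1}(\log K)^{2}}$, and summing yields precisely $2md\exp\set{-c_{1}(\log K)^{2}}$, the second inequality.

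I do not anticipate a genuine obstacle here; the argument is essentially elementary once the pointwise identity of the first step is in hand. The only point deserving care is that this identity must hold \emph{simultaneously} in $f\in\mathcal{H}$, so that the supremum $\sup_{f\in\mathcal{H}}\abs{\mathcal{E}_{d,m}^{(K)}(f)-\mathcal{E}_{d,m}(f)}$ — and not merely each individual difference — vanishes on $S_{K}$; this is immediate because the clipping operates on the data $\bm y_{i}$ rather than on the candidate function $f$. Finally, $\Omega\setminus S_{K}$ is measurable since each $\bm y_{i}\colon\Omega\to\mathbb{R}^{d}$ is measurable, so all probabilities above are well defined.
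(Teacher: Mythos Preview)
Your proposal is correct and follows essentially the same approach as the paper: the paper also notes that $\mathcal{E}_{d,m}^{(K)}(f)=\mathcal{E}_{d,m}(f)$ on $S_{K}$ to obtain the first inequality, and then applies the same double union bound over $i\in\set{1,\dots,m}$ and $j\in\set{1,\dots,d}$ together with Assumption~\ref{ass:basic}(i) to obtain the second. Your write-up is somewhat more explicit about the simultaneity in $f$ and about measurability, but there is no substantive difference.
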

	
	\begin{proof}
		From the tail probability condition in Assumption \ref{ass:basic}, we have
		\begin{eqnarray*}
			\PP{\Omega - S_{K}} &\leq& \sum_{i=1}^{m}\PP{\norm{\bm y_{i}}_{\infty}\geq K} \\
			&\leq& \sum_{i=1}^{m}\sum_{j=1}^{d}\PP{\abs{y_{i,j}}\geq K}\\
			&\leq& 2md\exp\left\lbrace -c_{1}\left( \log K\right)^{\lambda_{1}}\right\rbrace .
		\end{eqnarray*}
		The first inequality is obvious since $\mathcal{E}_{d,m}^{(K)}(f)=\mathcal{E}_{d,m}(f)$ on $S_{K}$. $\square$
	\end{proof}
	
	Summarizing Propositions \ref{prop:main1}, \ref{prop:trun1} and \ref{prop:trunprob}, we can obtain the generalization error bound for ERM over the clipped neural networks. The conclusion of Theorem \ref{thm:geneerror} below generalizes the result in \cite[Theorem 2.9]{Berner:2020a}. Here we deal with unbounded initial function $\varphi_{d}$ under the assumption \ref{ass:basic}, while \cite[Theorem 2.9]{Berner:2020a} only considers bounded one.
	
	\begin{thm}\label{thm:geneerror}
		Under Assumption \ref{ass:basic}. Let $\varepsilon, \varrho \in (0,1)$, let $\mathcal{H}=\mathcal{N}_{a,R,D}$. Then there exists a polynomial $h: \mathbb{R}^{6}\to \mathbb{R} $ such that for sample size $m$ with
		\begin{equation}
			m\geq 	h\left(\varepsilon^{-1},\log(R), W(a),L(a),\varrho^{-1},D \right)  ,\nonumber
		\end{equation}
		
		it holds that
		$$\PP{	\mathcal{E}_{d}(f_{d,m,\mathcal{H}})-\mathcal{E}_{d}(f_{d,\mathcal{H}})\leq \varepsilon}\geq 1-\varrho.$$
	\end{thm}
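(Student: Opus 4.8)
The plan is to combine Lemma~\ref{lem:geneupperbound} with Propositions~\ref{prop:trun1}, \ref{prop:main1} and~\ref{prop:trunprob}. By Lemma~\ref{lem:geneupperbound}, for every $\omega\in\Omega$ and every $K\in(0,\infty)$ we have $\mathcal{E}_{d}(f_{d,m,\mathcal{H}}(\omega))-\mathcal{E}_{d}(f_{d,\mathcal{H}})\le 2(G_{1}+G_{2}+G_{3})$, so it suffices to choose $K$ (allowed to depend on $m,\varepsilon,\varrho,d,D$) such that $G_{1}\le\varepsilon/6$ holds deterministically and $G_{2}\le\varepsilon/6$, $G_{3}\le\varepsilon/6$ each hold with probability at least $1-\varrho/2$; a union bound then yields $2(G_{1}+G_{2}+G_{3})\le\varepsilon$ with probability at least $1-\varrho$, which is the assertion. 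Throughout I use $d<P(a)$ (from Definition~\ref{dfn:ann}) to bound any polynomial in $d$ by a polynomial in $P(a)$, and treat $\lambda,c_{1},c_{2}$ from Assumption~\ref{ass:basic} as fixed constants absorbed into polynomial coefficients.

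For $G_{1}$, applying Proposition~\ref{prop:trun1} with accuracy $\varepsilon/6$ shows that $G_{1}\le\varepsilon/6$ holds as soon as $K\ge h_{1}(6\varepsilon^{-1},d,D)$, which is bounded by a polynomial in $(\varepsilon^{-1},P(a),D)$. For $G_{3}$, Proposition~\ref{prop:trunprob} gives $\mathbb{P}(G_{3}>\varepsilon/6)\le\mathbb{P}(\Omega\setminus S_{K})\le 2md\exp\{-c_{1}(\log K)^{2}\}$, so $\mathbb{P}(G_{3}\le\varepsilon/6)\ge 1-\varrho/2$ provided $2md\exp\{-c_{1}(\log K)^{2}\}\le\varrho/2$, i.e. provided $K\ge\exp\{\sqrt{c_{1}^{-1}\log(4md\varrho^{-1})}\}$; note that this lower bound on $K$ itself grows with $m$.

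The term $G_{2}$ is where the only genuine obstacle lies. By Proposition~\ref{prop:main1} with accuracy $\varepsilon/6$ and confidence $1-\varrho/2$, $\mathbb{P}(G_{2}\le\varepsilon/6)\ge 1-\varrho/2$ provided $m\ge K^{5\lambda}\,h_{2}(6\varepsilon^{-1},P(a),\log R,\log(2\varrho^{-1}),D)$. This is circular with the requirement for $G_{3}$: the former forces $m$ large relative to $K$, the latter forces $K$ large relative to $m$. I would break the circle by setting, for each $m$, $K=K_{m}:=h_{1}(6\varepsilon^{-1},d,D)+\exp\{\sqrt{c_{1}^{-1}\log(4md\varrho^{-1})}\}$, which makes the conditions for $G_{1}$ and $G_{3}$ hold simultaneously, and then checking that $m\ge K_{m}^{5\lambda}h_{2}(\cdots)$ holds for all $m$ above a suitable threshold $m_{0}$. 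Since $(a+b)^{5\lambda}\le 2^{5\lambda}(a^{5\lambda}+b^{5\lambda})$ and $h_{1}(6\varepsilon^{-1},d,D)^{5\lambda}h_{2}(\cdots)$ is a polynomial in $(\varepsilon^{-1},P(a),\log R,\varrho^{-1},D)$, the binding part is $m\ge 2^{5\lambda}\exp\{5\lambda\sqrt{c_{1}^{-1}\log(4md\varrho^{-1})}\}h_{2}(\cdots)$; taking logarithms, this reads $\log m\ge C+A\sqrt{\log m+B}$ with $A=5\lambda c_{1}^{-1/2}$, $B=\log(4d\varrho^{-1})$ and $C=\log(2^{5\lambda}h_{2}(\cdots))$, and since the left side grows linearly in $\log m$ while the right side grows like $\sqrt{\log m}$, it holds once $\log m\ge C+A^{2}+A\sqrt{C+B}$. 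Exponentiating bounds $m_{0}$ by $2^{5\lambda}h_{2}(\cdots)\,e^{A^{2}}\exp\{A\sqrt{\log(2^{5\lambda}h_{2}(\cdots)\,4d\varrho^{-1})}\}$, and---exactly as in the proof of Proposition~\ref{prop:trun1}, using $\exp\{\sqrt{c\log p(x)}\}=p(x)^{[c^{-1}\log p(x)]^{-1/2}}$---the factor $\exp\{A\sqrt{\log(\mathrm{polynomial})}\}$ is dominated by a polynomial. Hence there is a polynomial $h:\mathbb{R}^{5}\to\mathbb{R}$ with $m_{0}\le h(\varepsilon^{-1},P(a),\log R,\varrho^{-1},D)$.

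Finally, for any $m\ge h(\varepsilon^{-1},P(a),\log R,\varrho^{-1},D)$ take $K=K_{m}$ as above: then $G_{1}\le\varepsilon/6$ holds surely, $G_{2}\le\varepsilon/6$ and $G_{3}\le\varepsilon/6$ each hold with probability at least $1-\varrho/2$, so with probability at least $1-\varrho$ both hold, and on that event Lemma~\ref{lem:geneupperbound} gives $\mathcal{E}_{d}(f_{d,m,\mathcal{H}})-\mathcal{E}_{d}(f_{d,\mathcal{H}})\le 2(G_{1}+G_{2}+G_{3})\le\varepsilon$. The main difficulty, as noted, is disentangling the mutual dependence of $m$ and $K$; the reason it costs only polynomial blow-up is that the tail factor $\exp\{-c_{1}(\log K)^{2}\}$ decays faster than any negative power of $K$, so enforcing $G_{3}\le\varepsilon/6$ requires only a sub-polynomial inflation of $K$, and hence of $m$.
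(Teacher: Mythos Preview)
Your argument is correct, and the overall strategy---invoke Lemma~\ref{lem:geneupperbound}, control $G_1$ deterministically via Proposition~\ref{prop:trun1}, and control $G_2,G_3$ in probability via Propositions~\ref{prop:main1} and~\ref{prop:trunprob}---matches the paper exactly. The only genuine difference is how you resolve the circular dependence between $m$ and $K$. You set $K_m$ equal to the minimal threshold needed for $G_1$ and $G_3$ and then solve the resulting implicit inequality $\log m\ge C+A\sqrt{\log m+B}$ explicitly for its larger root. The paper instead makes the ansatz $K=m^{1/(6\lambda)}$: this turns the $G_2$ requirement $m\ge K^{5\lambda}h_2$ into $m^{1/6}\ge h_2$, i.e.\ $m\ge h_2^{6}$, and turns the $G_3$ requirement into $\tfrac{c_1}{36\lambda^2}(\log m)^2-\log m\ge\log(6d\varrho^{-1})$, so all three conditions become separate polynomial lower bounds on $m$ with no fixed point to unwind. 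Both routes exploit the same fact---that $\exp\{-c_1(\log K)^2\}$ decays faster than any power, so the coupling costs only polynomial inflation---but the paper's substitution avoids the quadratic-root algebra and the $(a+b)^{5\lambda}$ splitting. Your observation that $G_1$ is deterministic, allowing the confidence budget to be split as $\varrho/2+\varrho/2$ over $G_2,G_3$ rather than $\varrho/3$ each, is a small sharpening the paper does not make.
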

	\begin{proof}
		By Lemma \ref{lem:geneupperbound}, we have
		\begin{align*}
			\PP{	\mathcal{E}_{d}(f_{d,m,\mathcal{H}})-\mathcal{E}_{d}(f_{d,\mathcal{H}})> \varepsilon}
			&\leq \PP{2(G_{1}+G_{2}+G_{3})>\varepsilon}\\
			\leq \PP{G_{1}>\frac{\varepsilon}{6}}&+\PP{G_{2}>\frac{\varepsilon}{6}}+\PP{G_{3}>\frac{\varepsilon}{6}},
		\end{align*}
		thus it suffices to guarantee 
		\begin{equation}\label{eq:Gi}
			\PP{G_{i}>\frac{\varepsilon}{6}} \leq \frac{\varrho}{3}, \ \ i=1,2,3.
		\end{equation}
		By Propositions \ref{prop:trun1}, \ref{prop:main1} and \ref{prop:trunprob}, inequality \eqref{eq:Gi} holds if the following conditions hold
		\begin{equation}\label{eq:condition}
			\begin{split}
				K&\geq B_{1} \left[ 6\left( D^{2}+d^{\lambda_{2}}\right) \varepsilon^{-1}\sqrt{d} \right]^{2c_{1}^{-1}} ,\\
				m&\geq K^{5\lambda_{2}}
				h_{1}\left(6\varepsilon^{-1},\log(R),W(a),L(a),\log(3\varrho^{-1}),D \right),\\
				\exp\left\lbrace c_{1}\left( \log K\right)^{\lambda_{1}}\right\rbrace &\geq 6md\varrho^{-1},
			\end{split}
		\end{equation}
		where the constant $B_{1}$ and the function $h_{1}$ are introduced in Propositions \ref{prop:trun1} and \ref{prop:main1}. 
		
		Let $K=m^{1/(6\lambda_{2})}\geq 1$, \eqref{eq:condition} becomes 
		\begin{equation}\label{eq:condition2}
			\begin{split}
				&m \geq B_{1} \left[ 6\left( D^{2}+d^{\lambda_{2}}\right) \varepsilon^{-1}\sqrt{d} \right]^{12\lambda_{2} c_{1}^{-1}} ,\\
				&m\geq  h_{1}^{6\lambda_{2}}\left(6\varepsilon^{-1},\log(R),W(a),L(a),\log(3\varrho^{-1}),D \right)   ,\\
				&\exp\left\lbrace \frac{c_{1}}{(6\lambda_{2})^{\lambda_{1}}}\left( \log m\right)^{\lambda_{1}}-\log m\right\rbrace \geq 6d\varrho^{-1},
			\end{split}
		\end{equation}
		For the last inequality in \eqref{eq:condition2}, Assume that 
		\begin{equation}\label{eq:condition3}
			m \geq 6\exp\left\lbrace (6\lambda_{2})^{\lambda_{1}/(\lambda_{1}-1)}(2^{-1}c_{1})^{1/(1-\lambda_{1})}\right\rbrace d\varrho^{-1}, 
		\end{equation}
		then we have 
		\begin{equation}
			\exp\left\lbrace \frac{c_{1}}{(6\lambda_{2})^{\lambda_{1}}}\left( \log m\right)^{\lambda_{1}}-\log m\right\rbrace 
			\geq  \exp\left\lbrace\log m  \right\rbrace  \geq 6d\varrho^{-1}. \nonumber
		\end{equation}
		Summarizing \eqref{eq:condition2} and \eqref{eq:condition3}, we can prove that the desired polynomial $h$ exists. $\square$
	\end{proof}
	
	Theorem \ref{thm:geneerror} shows that for the ERM principle over clipped ReLU neural networks, if the unbounded output $\varphi_{d}(Y)$ satisfies Assumption \ref{ass:basic}, then the sample size $m$ required to achieve a generalization error small than $\varepsilon$ with confidence  $1-\varrho$ grows polynomially in $\varepsilon^{-1}$, $\varrho^{-1}$ and the complexity measures of networks $(W(a),L(a),R,D)$. This means that the curse of dimensionality is avoided in the generalization error. 
	
	\subsection{Approximation Error}
	From Corollary \ref{cor:composition}, we know that the estimation error can be decomposed into two parts: generalization error and approximation error. In Theorem \ref{thm:geneerror}, we have proved that, under Assumption \ref{ass:basic}, the generalization error part overcomes the curse of dimensionality. The approximation error measures how accurately the target $f_{d}(\cdot,T)$ can be approximated by neural networks in $\mathcal{H}=\mathcal{N}_{a,R,D}$. There are many research results on the approximation properties of neural networks, for examples \cite{Burger:2001error,Grohs:2023,Hornik:1991approximation,Ohn:2019smooth,elbr:2021deep}. For specific Kolmogorov PDEs, research including \cite{Elbr_chter:2021,Grohs:2023,Hutzenthaler_2020,Jentzen:2021,Berner:2020a} demonstrate that the solution can be approximated by deep artificial neural networks in which the number of parameters grows polynomially in both the prescribed accuracy $\varepsilon\in (0,1)$ and the dimension $d$. In this section, we prove that, under Assumption \ref{ass:basic_approx}, the clipped neural networks can also breaks the curse of dimensionality when approximating the solution $f_{d}(\cdot,T)$ of \eqref{eq:target}.
	
	\begin{assum}\label{ass:basic_approx}
		For the Kolmogorov PDE \eqref{eq:target}, assume that 
		\begin{enumerate}
			\item[(i)]The drift $\mu_{d}$ and diffusion $\sigma_{d}$ are affine functions and there exists constant $L$ independent of dimension $d$ such that
			\begin{equation}
				\norm{\mu_{d}(x)}+\norm{\sigma_{d}(x)}_{HS} \leq L(1+\norm{x}), \nonumber
			\end{equation}
			where $\norm{\cdot}_{HS}$ denotes the Hilbert-Schmidt norm on $\mathbb{R}^{d\times d}$.
			\item[(ii)] For every $\varepsilon \in (0,1)$, there exist ReLU neural networks $\Psi_{d,\varepsilon}$ with parameters $\eta_{d,\varepsilon}$ such that for every $x\in\mathbb{R}^{d}$ it holds that
			\begin{equation}\label{eq:ass2}
				\abs{\Psi_{d,\varepsilon}(x)} \leq C(1+\norm{x}^{\lambda}) \ \ \mathrm{and} \ \ \abs{\Psi_{d,\varepsilon}(x)-\varphi_{d}(x)} \leq \varepsilon(1+\norm{x}^{\lambda}), 
			\end{equation}
			where $\lambda\in [2,\infty)$ and $C\in(0,\infty)$ are constants independent of dimension $d$. Moreover, there exist a polynomial $g$ such that
			\begin{equation}\label{eq:psi}
			\max\{\mathcal{P}(\Psi_{d,\varepsilon}),\norm{\eta_{d,\varepsilon}}_\infty\}\leq g(d,\varepsilon^{-1}),
			\end{equation} 
			where $\mathcal{P}(\Psi_{d,\varepsilon})$ denote the number of parameters of $\Psi_{d,\varepsilon}$.
		\end{enumerate}
	\end{assum}

	\begin{prop}\label{prop:extension}
		Let $d\in \mathbb{N}$,  $\varepsilon \in (0,1)$, $T\in (0,\infty)$, $u\in \mathbb{R}$ and $v\in(u,\infty)$. Under Assumption \ref{ass:basic_approx}, there exists a polynomial $q$ such that for every $\varepsilon \in (0,1)$, there exists clipped ReLU neural networks $\Phi_{d,\varepsilon}\in \mathcal{N}_{a,R,D}$ satisfying
		\begin{itemize}
			\item[(\romannumeral1)] $\frac{1}{(v-u)^{d}}\norm{\Phi_{d,\varepsilon}-f_{d}(\cdot,T)}_{\mathcal{L}^{2}([u,v]^{d})}^{2} \leq \varepsilon.$
			\item[(\romannumeral2)] $\max\left\lbrace D,R,P(a) \right\rbrace \leq q(d,\varepsilon^{-1}).$
		\end{itemize}
	\end{prop}
	
	\begin{proof}
		By using the techniques in proof of \cite[Theorem 3.3]{Berner:2020a}, we can extend the results in \cite[Lemma 3.9]{Grohs:2023}. Consequently, there exist neural networks $\Lambda_{d,\varepsilon}$ with parameter $\theta_{d,\varepsilon}$ such that
		\begin{itemize}
			\item[(i)]$$\frac{1}{(v-u)^{d}}\int_{[u,v]^{d}}\abs{f_{d}(x,T)-\Lambda_{d,\varepsilon}(x)}^{2}dx  \leq \varepsilon. \nonumber$$
			\item[(ii)]$\mathcal{P}(\Lambda_{d,\varepsilon})\leq C_2 d^{2\lambda}\varepsilon^{-2}\mathcal{P}(\Psi_{d,\varepsilon})$ and $\norm{\theta_{d,\varepsilon}}_{\infty}\leq C_2 d^{\lambda+3/2}\varepsilon^{-1}\norm{\eta_{d,\varepsilon}}_{\infty},$ where $C_2 \in (0,\infty)$ is a constant.
		\end{itemize}
		
		It suffices to replace the neural networks by the clipped one. From Assumption \ref{ass:basic_approx},
		For every $x\in [u,v]^{d}$, $\varepsilon \in (0,1)$, we have
		\begin{align*}
			\abs{\varphi_{d}(x)}
			&\leq \abs{\Psi_{d,\varepsilon}(x)-\varphi_{d}(x)}+\abs{\Psi_{d,\varepsilon}(x)}\\
			&\leq(C+\varepsilon)(1+\norm{x}^{\lambda}).
		\end{align*} 
		From \cite[Lemma A.3]{Berner:2020a}, we can easily prove that $$\sup_{x\in [u,v]^{d}}\abs{f_{d}(x,T)} = O(d^{\lambda/2})$$. Let $D=\sup_{x\in [u,v]^{d}}\abs{f_{d}(x,T)}$, from \cite[Lemma A.1]{Berner:2020a}, the ReLU neural networks determined by parameter 
		\begin{equation}
			\eta = \left( 
			\left(\begin{bmatrix} 1 \\ -1  \end{bmatrix},\begin{bmatrix} 0 \\ 0  \end{bmatrix} \right),
			\left(\begin{bmatrix} -1&0 \\ 0&-1  \end{bmatrix},\begin{bmatrix} D \\ D  \end{bmatrix} \right),
			\left(\begin{bmatrix} -1&1 \end{bmatrix},\begin{bmatrix} 0&0  \end{bmatrix} \right) \right) . \nonumber
		\end{equation}
		is exactly function $\mathrm{Clip}_{D}(x)$ in Definition \ref{dfn:clipped}. By composition of neural networks (\cite[section 5]{Elbr_chter:2021}), thus there exist
		a network $\Phi_{d,\varepsilon}\in \mathcal{N}_{a,R,D}$ such that
		$$\Phi_{d,\varepsilon} = \mathrm{Clip}_{D}\circ\Lambda_{d,\varepsilon} = \min(\abs{\Lambda_{d,\varepsilon}},D)\mathrm{sgn}(\Lambda_{d,\varepsilon}).$$
		Since $D = \sup_{x\in [u,v]^{d}}\abs{f_{d}(x,T)}$, it is obviously
		\begin{eqnarray*}
			\int_{[u,v]^{d}}\abs{f_{d}(x,T)-\Phi_{d,\varepsilon}(x)}^{2}dx  
			&\leq& \int_{[u,v]^{d}}\abs{f_{d}(x,T)-\Lambda_{d,\varepsilon}(x)}^{2}dx \\
			&\leq& \varepsilon.
		\end{eqnarray*}
		Compositing with the network determined by $\eta$ does not change the magnitude of the number of parameters, but the parameter bound becomes $\max\left\lbrace \norm{\theta_{d,\varepsilon}}_{\infty},D\right\rbrace $, notice $D=O(d^{\lambda/2})$ and \eqref{eq:psi}, the proof is completed. $\square$
	\end{proof}
	
	The Proposition \ref{prop:extension} demonstrate that if we can approximate the initial function $\varphi_{d}$ using neural networks, where the number of parameters $\mathcal{P}(\Psi_{d,\varepsilon})$ and the parameter bound $\norm{\theta_{d,\varepsilon}}_{\infty}$ both grows polynomially in $d$ and $\varepsilon^{-1}$, then we can also approximate the solution $f_{d}(\cdot,T)$ by the clipped ReLU neural networks without the curse of dimensionality. In most applications of Kolmogorov PDEs, the initial function has a simpler closed form, making it easier to verify the Assumption \ref{ass:basic_approx}.

	\subsection{Estimation error}
	In this section, by summarizing the results of generalization and approximation errors, we can conclude that, under some assumptions, the ERM over clipped ReLU neural networks overcomes the curse of dimensionality for the overall estimation error.
	\begin{thm}\label{thm:totalerror}
		Assume Setting \ref{set:ERM}, suppose that Assumptions \ref{ass:basic} and \ref{ass:basic_approx} hold, then there exist polynomials $p$ and $q$ such that for every $d,m\in \mathbb{N}$, $\varepsilon,\varrho \in (0,1)$ with 
		\begin{equation}
			m \geq p(d,\varepsilon^{-1},\varrho^{-1}), \nonumber \\
		\end{equation}
		there exist clipped ReLU neural networks $\mathcal{H} = N_{a,R,D}$ such that
		\begin{equation}
			\PP{\frac{1}{(v-u)^{d}}\norm{f_{d,m,\mathcal{H}}-f_{d}(\cdot,T)}_{\mathcal{L}^{2}([u,v]^{d})}^{2} \leq \varepsilon}\geq 1-\varrho , \nonumber
		\end{equation}
		and $\max\left\lbrace P(a),R,D\right\rbrace \leq q(d,\varepsilon^{-1})$.
	\end{thm}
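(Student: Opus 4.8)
The plan is to combine the bias-variance decomposition in Corollary~\ref{cor:composition} with the generalization bound of Theorem~\ref{thm:geneerror} and the approximation bound guaranteed by Assumption~\ref{ass:solution}, allocating each of the two error contributions a budget of $\varepsilon/2$. First I would invoke Assumption~\ref{ass:solution} with target accuracy $\varepsilon/2$: this produces a polynomial $q$ and a clipped network $g_{d,\varepsilon/2}\in\mathcal{N}_{a,R,D}$ with $\max\{D,R,P(a)\}\le q(d,(2\varepsilon^{-1}))$ and $(v-u)^{-d}\norm{g_{d,\varepsilon/2}-f_d(\cdot,T)}_{\mathcal{L}^2([u,v]^d)}^2\le\varepsilon/2$. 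Fixing this architecture $a$, parameter bound $R$ and clipping level $D$ pins down the hypothesis class $\mathcal{H}=\mathcal{N}_{a,R,D}$ for the remainder of the argument. Since $f_{d,\mathcal{H}}$ is the $\mathcal{E}_d$-minimizer over $\mathcal{H}$ and $g_{d,\varepsilon/2}\in\mathcal{H}$, Lemma~\ref{lem:excessrisk} gives
\begin{equation*}
(v-u)^{-d}\norm{f_{d,\mathcal{H}}-f_d(\cdot,T)}_{\mathcal{L}^2([u,v]^d)}^2
=\mathcal{E}_d(f_{d,\mathcal{H}})-\mathcal{E}_d(f_d(\cdot,T))
\le\mathcal{E}_d(g_{d,\varepsilon/2})-\mathcal{E}_d(f_d(\cdot,T))\le\varepsilon/2,
\end{equation*}
so the approximation error in Corollary~\ref{cor:composition} is already controlled.

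Next I would handle the generalization error $\mathcal{E}_d(f_{d,m,\mathcal{H}})-\mathcal{E}_d(f_{d,\mathcal{H}})$ by applying Theorem~\ref{thm:geneerror} with accuracy $\varepsilon/2$ and confidence $\varrho$. That theorem yields a polynomial $h$ such that whenever $m\ge h((2\varepsilon^{-1}),P(a),\log R,\varrho^{-1},D)$ we have $\PP{\mathcal{E}_d(f_{d,m,\mathcal{H}})-\mathcal{E}_d(f_{d,\mathcal{H}})\le\varepsilon/2}\ge 1-\varrho$. The key bookkeeping step is to substitute the bounds $P(a),R,D\le q(d,2\varepsilon^{-1})$ from the previous paragraph into the arguments of $h$; since $h$ is a polynomial, $\log R\le\log q(d,2\varepsilon^{-1})\le q(d,2\varepsilon^{-1})$, the composition $m\ge h((2\varepsilon^{-1}),q(d,2\varepsilon^{-1}),q(d,2\varepsilon^{-1}),\varrho^{-1},q(d,2\varepsilon^{-1}))$ is again bounded by a single polynomial $p(d,\varepsilon^{-1},\varrho^{-1})$. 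On the event where the generalization bound holds, Corollary~\ref{cor:composition} gives
\begin{equation*}
\mathbb{E}\!\left[(f_{d,m,\mathcal{H}}(X_d)-f_d(X_d,T))^2\right]
=(v-u)^{-d}\norm{f_{d,m,\mathcal{H}}-f_d(\cdot,T)}_{\mathcal{L}^2([u,v]^d)}^2
\le\varepsilon/2+\varepsilon/2=\varepsilon,
\end{equation*}
which is the claimed high-probability bound. Setting $q$ of the theorem to be the composed polynomial that dominates $q(d,2\varepsilon^{-1})$ completes the argument.

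The main obstacle is not conceptual — the structure is a straightforward stitching of earlier results — but rather the delicate circularity in the dependencies between the architecture and the error budgets. Assumption~\ref{ass:solution} fixes $(a,R,D)$ only \emph{after} the approximation accuracy is chosen, yet Theorem~\ref{thm:geneerror} requires those same quantities to feed the sample-size polynomial; one has to be careful that choosing $\mathcal{H}$ to control approximation does not blow up the generalization requirement beyond polynomial size. This works precisely because the approximation bounds in Assumption~\ref{ass:solution}(ii) and the sample-size bound in Theorem~\ref{thm:geneerror} are \emph{both} polynomial, so their composition stays polynomial; the argument would break if either were, say, exponential in $d$. A secondary point requiring care is that Theorem~\ref{thm:geneerror} is stated for $\mathcal{H}=\mathcal{N}_{a,R,D}$ and presupposes Assumption~\ref{ass:basic}, which is part of the hypotheses here, so no extra verification is needed; one just needs to confirm the constant factors $2$ introduced by halving the budget are absorbed into the polynomials, which is immediate.
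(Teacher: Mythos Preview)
Your proposal is correct and follows essentially the same route as the paper: apply the bias--variance decomposition of Corollary~\ref{cor:composition}, control the approximation term via Assumption~\ref{ass:solution} (using that $f_{d,\mathcal{H}}$ does at least as well as $g_{d,\varepsilon}$), and then feed the resulting polynomial bounds on $P(a),R,D$ into the sample-size polynomial of Theorem~\ref{thm:geneerror}. The paper's proof is terser---it simply writes the inequality chain and declares the result a direct consequence of Theorem~\ref{thm:geneerror}---but your more explicit $\varepsilon/2$ splitting and polynomial-composition bookkeeping amount to the same argument.
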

	\begin{proof}
		This is a direct consequence of Corollary \ref{cor:composition}, Theorem \ref{thm:geneerror} and Proposition \ref{prop:extension}. 
	\end{proof}

	\subsection{Important cases of linear Kolmogorov PDEs with unbounded initials}\label{sec:example}
	In this section, we introduce two important linear Kolmogorov PDEs: heat equations and Black-Scholes PDEs in which the initial functions are unbounded. 
	\begin{case}[Heat equations] \label{set:heat}
		Let $d\in \mathbb{N}$, denote the $d$-dimensional identity matrix by $I_{d}$. Assume that the drift function $\mu_{d}$ and the diffusion function $\sigma_d$ satisfy
		\begin{equation}
			\mu_{d}(x) = 0, \ \  \sigma_{d}(x) = \sqrt{2} I_{d},
		\end{equation}
		then the linear Kolmogorov PDE \eqref{eq:target} becomes the heat equation defined by
		\begin{equation}
			(\nabla_{t}f_{d})(t,x) = (\Delta_{x}f_{d})(t,x) . \nonumber
		\end{equation}
	\end{case}
	
	For the heat equation on $\mathbb{R}^{d}$, if the initial function $\varphi_{d}$ is globally bounded and continuous, then the solution $f_{d}$ exists and can be expressed as 
	\begin{equation}\label{eq:evans}
		f_{d}(x,t) = \frac{1}{\left(4\pi t\right)^{n/2} }\int_{\mathbb{R}^{d}}e^{-\frac{\norm{x-y}_{2}^{2}}{4t}}\varphi_{d}(y)dy,
	\end{equation}
	moreover, we have $f_{d}\in C^{\infty}(\mathbb{R}^{d}\times (0,\infty))$, see \cite[Chapter 2.3]{evans2010partial} for a rigorous proof. The theoretical results of the generalization error in \cite{Berner:2020a} only hold for heat equations with bounded initial functions. However, for unbounded initial functions such as polynomials, the solution may also exists. In this paper, we pay more attention to the following example.
	\begin{example}[Heat equations with polynomial initial functions]\label{example:heatpoly}
		Assume Setting \ref{set:heat}, let $d,k\in \mathbb{N}$, let $\left\lbrace c_{d,i},1\leq i \leq d\right\rbrace  \subseteq \mathbb{R}$ satisfy that $\sup_{d\in \mathbb{N},i=1,\dots,d}\abs{c_{d,i}}<\infty$. The initial function $\varphi_{d}$ is given by
		$$\varphi_{d}(x)=\sum_{i=1}^{d}c_{d,i}x_{i}^{k}.$$
	\end{example}
	
	For the equations in Example \ref{example:heatpoly}, the initial function is obviously unbounded, which means that the generalization error bound in \cite{Berner:2020a} does not hold. However, we can apply Theorem~\ref{thm:geneerror} to establish a similar error bound when the initial functions are unbounded. We will verify Assumption \ref{ass:basic} in Proposition \ref{prop:verify} below.
	
	Next, we consider the Black-Scholes PDE which governs the price evolution of European options under the Black–Scholes model.
	\begin{case}[Black-Scholes partial differential equation]\label{set:BSmodel}
		Let $d\in \mathbb{N}$, in accordance with reality, assume $ [u,v] \subseteq (0,\infty)$. Let $(\alpha_{d,i})_{i=1}^{d}$, $(\beta_{d,i})_{i=1}^{d}\subseteq \mathbb{R}$ satisfy 
		$$\sup_{d\in\mathbb{N},1\leq i \leq d}\left(\abs{\alpha_{d,i}}+\abs{\beta_{d,i}} \right)<\infty,$$ 
		let $\Sigma_{d} \in \mathbb{R}^{d\times d}$ satisfy $\norm{\Sigma_{i,d}}_{2}=1$, $i=1,2,\dots,d$, where $\Sigma_{i,d}$ is the $i$-th row of $\Sigma_{d}$. Assume the drift function $\mu_{d}$ and the diffusion function $\sigma_d$ satisfy
		\begin{equation}
			\mu_{d}(x) = \left(\alpha_{d,1}x_{1},\dots,\alpha_{d,d}x_{d} \right), \ \ \sigma_{d}(x) = \mathrm{diag}\left(\beta_{d,1}x_{1},\dots,\beta_{d,d}x_{d} \right)\Sigma_{d}. \nonumber
		\end{equation}
	\end{case}
	
	For different choice of initial functions $\varphi_{d}(\cdot)$, Black-Scholes PDEs model the pricing problem for different types of European options. We take basket call options and call on max options in \cite{Grohs:2023} for examples since the corresponding initial functions are unbounded.
	
	\begin{example}[Basket call options]\label{example:basket}Assume Setting \ref{set:BSmodel}, let $\left( c_{d,i}\right)_{i=1}^{d}\subseteq [0,1] $ satisfy $\sum_{i=1}^{d}c_{d,i}=1$, let $K_{d}\in (0,\infty)$ satisfy $\sup_{d\in \mathbb{N}}K_{d} <\infty$. The initial function $\varphi_{d}$ is given by
		\begin{equation} \label{eq:basket}
			\varphi_{d}(x) = \max\left\lbrace \sum_{i=1}^{d}c_{d,i}x_{i}-K_{d},0 \right\rbrace.
		\end{equation}
	\end{example}
	
	\begin{example}[Call on max options]\label{example:callonmax}Assume Setting \ref{set:BSmodel}, let $\left( c_{d,i}\right)_{i=1}^{d}\subseteq [0,\infty) $ satisfying $\sup_{d\in\mathbb{N}, 1\leq i \leq d}c_{d,i}<\infty$, let $K_{d}\in (0,\infty)$ satisfying $\sup_{d\in \mathbb{N}}K_{d} <\infty$. The initial function $\varphi_{d}$ is given by
		\begin{equation} \label{eq:callonmax}
			\varphi_{d}(x) = \max\left\lbrace \max\left\lbrace c_{d,1}x_{1},c_{d,2}x_{2},\cdots,c_{d,d}x_{d}\right\rbrace -K_{d},0 \right\rbrace. 
		\end{equation}
	\end{example}
	
	In the next proposition, for the two important cases, namely the heat equation and the Black-Scholes PDE, we prove that the required Assumption \ref{ass:basic} holds when the initial functions grow polynomially.
	\begin{prop}\label{prop:verify}
		For the heat equation and the Black-Scholes PDE (see Case \ref{set:heat} and \ref{set:BSmodel}), the tail probability condition in Assumption \ref{ass:basic} holds with $\lambda_{1} = 2$. For initial functions of Examples \ref{example:heatpoly}, \ref{example:basket}, \ref{example:callonmax}, the polynomial growth condition in Assumption \ref{ass:basic} holds.
	\end{prop}  
	\begin{proof}
		For the heat equation, we can solve the stochastic differential equation \eqref{eq:sde} explicitly
		$$Y_{d}=S_{T}=X_{d} + \sqrt{2}B_{T}.$$
		Note that $X_{d,i}\in [u,v]$ is bounded and $B_{T,i}$ is normally distributed random variable $N(0,\sqrt{T})$. From the concentration theory \cite[Section 3.5]{vershynin_2018}, $X_{d,i}$ and $B_{T,i}$ are independent sub-Gaussian random variables, hence $Y_{d,i}=X_{d,i}+\sqrt{2}B_{T,i}$ is also sub-Gaussian random variable. By sub-Gaussian properties \cite[Proposition 2.5.2]{vershynin_2018}, there exists a parameter $K$ depending only on on $u,v,T$ such that
		\begin{eqnarray*}
			\PP{\abs{Y_{d,i}}\geq t}
			&\leq& 2\exp\left\lbrace -\frac{t^{2}}{K} \right\rbrace \\
			&\leq& 2\exp\left\lbrace -\frac{\left( \log t\right) ^{2}}{K} \right\rbrace,
		\end{eqnarray*}
		where the second inequality follows from $ 0\leq \log t\leq t$ for $t\in [1,\infty)$, hence the tail probability condition is satisfied. 
		
		For the Black-Scholes PDE, we can also solve the SDE explicitly
		\begin{eqnarray}
			Y_{d,i}=S_{T,i}= X_{d,i}\exp\left\lbrace \left( \alpha_{d,i}-\frac{\norm{\beta_{d,i}\Sigma_{d,i}}^{2}_{2}}{2}\right)T+\beta_{d,i}\left\langle \Sigma_{d,i},B_{T}\right\rangle  \right\rbrace , \nonumber 
		\end{eqnarray}
		where $\Sigma_{d,i}$ is the $i$-th row of matrix $\Sigma_{d}$ and $\left\langle \cdot,\cdot \right\rangle$ is the inner product in $\mathbb{R}^{d}$. For every $d\in \mathbb{N}$, $i\in \set{1,2,\dots,d}$, $t\in [1,\infty)$, we have
		\begin{eqnarray*}
			\PP{\abs{Y_{d,i}}\geq t} &=&\PP{\log\abs{Y_{d,i}}\geq \log t}\\
			&=&\PP{\log\abs{X_{d,i}}+\left( \alpha_{d,i}-\frac{\norm{\beta_{d,i}\Sigma_{d,i}}^{2}_{2}}{2}\right)T+\beta_{d,i}\left\langle \Sigma_{d,i},B_{T} \right\rangle \geq \log t}. 
		\end{eqnarray*}
		Note that in Setting \ref{set:BSmodel}, we assume that 
		$$0<u<v,\ \  \sup_{d\in\mathbb{N},1\leq i \leq d}\left(\abs{\alpha_{d,i}}+\abs{\beta_{d,i}} \right)<\infty,$$ 
		and $\norm{\Sigma_{d,i}}=1$ for every $i\in \left\lbrace1,2,\dots,d \right\rbrace $. Hence the random variable 
		$$\log\abs{X_{d,i}}+\left( \alpha_{d,i}-\frac{\norm{\beta_{d,i}\Sigma_{d,i}}_{2}^{2}}{2}\right)T$$ is bounded and the upper bound is independent of $d$. Since $B_{T}\sim N(0,TI_{d})$, the random variable $\beta_{d,i}\left\langle \Sigma_{d,i},B_{T} \right\rangle$ is normally distributed with mean 0 and variance $T\beta_{d,i}^{2}$. In summary, the random variable $\log \abs{Y_{d,i}}$ is the sum of two independent sub-Gaussian random variables, hence there exists a parameter $K$ independent of $d$ such that 
		\begin{eqnarray*}
			\PP{\abs{Y_{d,i}}\geq t} 
			\leq 2\exp\left\lbrace -\frac{\left( \log t\right) ^{2}}{K} \right\rbrace.
		\end{eqnarray*}
		It is obvious that the initial functions in Examples \ref{example:heatpoly}, \ref{example:basket}, \ref{example:callonmax} satisfy the polynomial growth condition, the proof is completed.
	\end{proof}
	
	Proposition \ref{prop:verify} actually indicates that the Assumption \ref{ass:basic} holds if the initial functions of the heat equation and the Black-Scholes PDE satisfy the polynomial growth condition, which may be obviously ture in most cases. Hence the generalization result in Theorem \ref{thm:geneerror} holds for most deep learning problems arising in solving heat equations and Black-Scholes PDEs, especially Examples \ref{example:heatpoly}, \ref{example:basket}, \ref{example:callonmax}. Moreover, we can prove that the overall error also breaks the curse of dimensionality.
	
	\begin{prop}\label{thm:heat_bs}
		Assume Setting \ref{set:ERM}, for Examples \ref{example:heatpoly}, \ref{example:basket}, \ref{example:callonmax}, there exist polynomials $p$ and $q$ such that for every $d,m\in \mathbb{N}$, $\varepsilon,\varrho \in (0,1)$ with 
		\begin{equation}
			m \geq p(d,\varepsilon^{-1},\varrho^{-1}), \nonumber \\
		\end{equation}
		there exist clipped ReLU neural networks $\mathcal{H} = N_{a,R,D}$ such that
		\begin{equation}
			\PP{\frac{1}{(v-u)^{d}}\norm{f_{d,m,\mathcal{H}}-f_{d}(\cdot,T)}_{\mathcal{L}^{2}([u,v]^{d})}^{2} \leq \varepsilon}\geq 1-\varrho , \nonumber
		\end{equation}
		and $\max\left\lbrace P(a),R,D\right\rbrace \leq q(d,\varepsilon^{-1})$.
	\end{prop}
	\begin{proof}
		From Theorem \ref{thm:totalerror} and Proposition \ref{prop:verify}, we only need to verify the Assumption \ref{ass:basic_approx} for Examples \ref{example:basket} and \ref{example:callonmax}. Obviously, condition $(i)$ in Assumption \ref{ass:basic_approx} holds for Black-Scholes PDEs. For condition $(ii)$, from \cite[Lemma 4.6]{Grohs:2023}, there exists a neural network $\Psi_{d}$ with parameter $\eta_{d}$ such that
		\begin{eqnarray*}
			\Psi_{d}(x)=\max\left\lbrace \sum_{i=1}^{d}c_{d,i}x_{i}-K_{d},0 \right\rbrace,
		\end{eqnarray*}
		$\mathcal{P}(\Psi_{d})\leq 4d$ and $\norm{\eta_{d}}_{\infty}\leq\sup_{d\in \mathbb{N}}\left(K_{d}+c_{d,i} \right) <\infty.$ Network $\Psi_{d}$ satisfies \eqref{eq:ass2} with $\lambda=2$, thus the initial function in Example \ref{example:basket} satisfies the condition (ii). By \cite[Lemma 4.6]{Grohs:2023}, there also exists a neural network $\Psi_{d}$ with parameter $\eta_{d}$ such that
		\begin{eqnarray*}
			\Psi_{d}(x)= \max\left\lbrace \max\left\lbrace c_{d,1}x_{1},c_{d,2}x_{2},\cdots,c_{d,d}x_{d}\right\rbrace -K_{d},0 \right\rbrace,
		\end{eqnarray*}
		$\mathcal{P}(\Psi_{d})\leq 6d^{3} $ and $\norm{\eta_{d}}_{\infty}\leq\sup_{d\in \mathbb{N},i=1,\dots,d}\left(K_{d}+c_{d,i} \right) <\infty$. In conclusion, Assumption \ref{ass:basic_approx} holds for Examples \ref{example:basket} and \ref{example:callonmax}. For Example \ref{example:heatpoly}, the endpoint solution $f_{d}(\cdot,T)$ can be written as 
		\begin{eqnarray*}
			f_{d}(x,T) &=& \sum_{i=1}^{d}\E{c_{d,i}\left(x_{i}+\sqrt{2T}Z_{i} \right)^{k} }\\
			&=&\sum_{i=1}^{d}P_{d,i}(x_{i}).
		\end{eqnarray*} 
		For every $d\in \mathbb{N}$, $1\leq i \leq d$, the degree of polynomial $P_{d,i}$ is $k$ and the coefficients are uniformly bounded.
		Based on approximation results of ReLU neural networks (\cite[Proposition \uppercase\expandafter{\romannumeral3}.5]{elbr:2021deep}, \cite[Theorem 6.5]{Elbr_chter:2021}), there exists a neural network $\Phi_{\varepsilon}:\mathbb{R}\to \mathbb{R}$ with $O(\log\left(\varepsilon^{-1} \right) )$ layers, $O(1)$ width and $O(1)$ parameters bound satisfying
		\begin{equation}
			\norm{\Phi_{\varepsilon}(x)-P_{d,i}(x)}_{\mathcal{L}^{\infty}([u,v])}\leq \varepsilon. \nonumber
		\end{equation}  
		By basic calculus of neural networks (\cite[section 5]{Elbr_chter:2021}, \cite[Section 5]{Jentzen:2021}), there exists ReLU neural networks $\Phi_{d,\varepsilon}$ with at most $O(d^{2}\left( d+\log\left(d\varepsilon^{-1}\right)  \right) )$ number of parameters and $O(1)$ parameter bound satisfying 
		\begin{equation}
			\norm{\Phi_{d,\varepsilon}-\sum_{i=1}^{d}P_{d,i}(x_{i})}_{\mathcal{L}^{\infty}([u,v]^{d})}\leq \varepsilon. \nonumber
		\end{equation}
		Using the same technique as in the proof of Proposition \ref{prop:extension}, we find that the required clipped ReLU neural networks in Proposition \ref{prop:extension} exist. The proof is completed.
	\end{proof}

	\section{Conclusion}\label{sec:conclusion}
	In this paper, we establish two assumptions under which the ERM over clipped ReLU neural networks can overcome the curse of dimensionality for numerical approximation of linear Kolmogorov PDEs when the initial functions are polynomially growing. For Black-Scholes PDEs and heat equations, we prove that the two assumptions are indeed satisfied if the initial function can be approximated by ReLU neural networks without the curse of dimensionality, see Theorem \ref{thm:totalerror}. For specific linear Kolmogorov PDEs such as examples \ref{example:heatpoly}, \ref{example:basket}, \ref{example:callonmax}, we prove that the deep learning algorithms based on the ERM over clipped ReLU neural networks are capable of avoiding of the curse of dimensionality, see Proposition \ref{thm:heat_bs}. 
	
	Classical learning theory usually employs the Hoeffding's inequality to establish the PAC bound for the generalization error. However, when the empirical loss is unbounded, this technique does not work. In this paper, we use the truncation trick to extend the generalization results in \cite{Berner:2020a} to the cases involving unbounded functions $\varphi_{d}$. Moreover, we prove that, for regression problem under Setting \ref{set:optim}, the sample complexity $m$ required to achieve an generalization error within $\varepsilon$ with a confidence $\varrho^{-1}$ grows polynomially both in size of the clipped neural networks and $(\varepsilon^{-1},\varrho^{-1})$, which means that the curse of dimensionality is broken for the generalization part. Furthermore, if the target functions can be approximated by the neural networks without the curse of dimensionality, we can conclude that the overall error still overcomes the curse of dimensionality. 
	
	\section*{Acknowledgments}
	This work is supported by the National Natural Science Foundation of China through grant 72071119.

\end{document}